\newtheorem{theorem}{Theorem}[section]
\newtheorem{lemma}[theorem]{Lemma}
\theoremstyle{definition}
\newtheorem{defn}{Definition}[section]
\newtheorem{prop}{Proposition}[section]
\newtheorem{exmp}{Example}[section]
\newtheorem{rem}{Remark}[section]
\title{Connectedness of the Moduli Space of Genus 1 Planar Tropical Curves}
\author{Stanley Wang}
\begin{document}

\begin{abstract}
Tropical geometry is a relatively recent field in mathematics created as a simplified model for certain problems in algebraic geometry. We introduce the definition of  abstract and planar tropical curves as well as their properties, including combinatorial type and degree. We also talk about the moduli space, a geometric object that parameterizes all possible types of abstract or planar tropical curves subject to certain conditions. Our research focuses on the moduli spaces of planar tropical curves of genus one, arbitrary degree $d$ and any number of marked, unbounded edges. We prove that these moduli spaces are connected.
\end{abstract}

\maketitle

\mbox{}\vspace*{-2\baselineskip}

\section{Introduction}

Tropical geometry is a developing field in mathematics, becoming more prominent since the late 1990s. This area of mathematics is dubbed ``tropical'' in honor of the Brazilian mathematician Imre Simon and his numerous contributions to this field. 

Tropical geometry is used to model similar, simpler problems in algebraic geometry by turning questions about algebraic varieties into questions about polyhedral complexes. Tropical geometry is related to other areas of math as well, such as real and complex geometry, and can be used to compute Zeuthen's numbers and Welschniger numbers \cite{mikhalkin2006tropical}. Problems in tropical geometry such as checking tropical connectivity for intersecting hypersurfaces can be solved using algorithms and computing, leading to a better understanding of tropical varieties \cite{theobald2006frontiers}. As for practical applications, tropical geometry has many implications in economics. For example, a tropical line appeared in Elizabeth Baldwin and Paul Klemperer's design of auctions used by the Bank of England in the 2007 financial crisis \cite{baldwin2016understanding}. In addition, Yoshinori Shiozawa found that Ricardian trade theory can be interpreted as subtropical convex algebra \cite{shiozawa2015trade}. Finally, tropical geometry can be used as a framework for optimization problems arising in job scheduling, location analysis, transportation networks, decision making and discrete event dynamical systems.

Tropical curves come in two different types: abstract and planar. An abstract tropical curve can be thought of as a metric graph while a planar tropical curve is an abstract tropical curve along with a mapping of the abstract tropical curve to $\mathbb{R}^2$ giving each edge a two dimensional direction vector. We focus on tropical curves of genus $1$, which means that the curve's subgraph has only one cycle, $\Gamma_c$. The moduli space of tropical curves is a geometric object which allows us to parameterize all the different types of curves and has the properties of a topological space. The moduli space of abstract tropical curves is simple and already understood well, so in this project we investigate the properties of the moduli space of planar tropical curves with genus $1$.

The main result in this paper proves the connectedness of the moduli space of genus $1$ planar tropical curves of any degree: \\

\noindent \textbf{Theorem~\ref{theorem5.3}.} \textit{The moduli space $\mathcal{M}_{=1, n}(d)$ of degree $d$ and genus $1$ planar tropical curves with $n$ marked unbounded edges is connected.} \\

We define abstract tropical curves, planar tropical curves, and the properties they satisfy, as well as the notion of a moduli space in Section 2. In Section 3, we discuss the definitions of degeneration and regeneration, maps we use to manipulate the bounded edges in planar tropical curves while preserving connectedness in the moduli space. We also narrow our scope to tropical curves  with no bounded edges outside of the cycle $\Gamma_c$ and no marked unbounded edges. Then, we find an upper bound for the absolute values of the coordinates of the direction vectors of bounded edges in planar tropical curves and prove that the moduli space is connected in the case that the degree of the planar tropical curve is $1$. In Section 4, we delve deeper into degenerations and regenerations, defining specific types of these maps including three edge degeneration, quadrilateral-triangle degeneration, and triangle-quadrilateral regeneration. In Section 5, we state and prove two lemmas which we use to finally prove that the moduli space of all genus $1$ planar tropical curves of degree $d$ is connected. We discuss the connectedness of moduli spaces when certain constraints on our curves (such as the genus) are lifted in Section 6.

\section{Moduli Space of Tropical Curves}
In this section, we recall some definitions from Gathmann, Kerber, and Markwig in \cite{gathmann2008kontsevich} and \cite{kerber2006counting} about tropical curves and their moduli spaces. We will adopt several definitions regarding topology, continuity, and connectedness from Armstrong in \cite{armstrong2013basic} but will not state these definitions for the sake of brevity.

\subsection{Metric Graphs}

\begin{defn}
A metric graph $\Gamma(V, E)$ consists of a set $V$ of points called \textit{vertices} and a set $E$ of lines called \textit{edges} that are connected to one or two vertices. In a metric graph, an edge that connects two vertices is called a \textit{bounded edge} and is associated with a positive real number called its length. A bounded edge of length $l$ can be thought of as an interval $[0, l]$ of the real line. An edge can also be \textit{unbounded}, which means it is connected to only one vertex and is not given a length, corresponding to the interval $[0, \infty)$. A \textit{flag} $F$ consists of a vertex and an edge emanating from that vertex. The \textit{valence} of a vertex $V$ is the number of of flags $F$ with $V$ as its vertex. The \textit{genus} of a graph is defined to be $1 + E - V$ where $E$ is the number of bounded edges and $V$ is the number of vertices. The genus can also be thought of as the number of cycles in the graph. For example, a tree has genus $0$ because it has no cycles and has one less bounded edge than it has vertices. 
\end{defn}

\subsection{Abstract Tropical Curves}

\begin{defn}
We define an \textit{abstract tropical curve} as a connected metric graph with each vertex having valence at least $3$. An abstract tropical curve can be \textit{$n$-marked}, which means that $n$ of the graph's unbounded edges are marked and distinguishable. An $n$-marked abstract curve is denoted by $(\Gamma, x_1, x_2, \ldots , x_n)$ where $\Gamma$ is the graph and the $x_i$ are marked, unbounded edges. Here is an example of the simplest abstract tropical curve.
\end{defn}

\begin{figure}[ht]
\centering
\begin{tikzpicture}

\draw[dashed] (0, 0) -- (0.85, 0.5) node [at end] {$x_1$};

\draw[dashed] (0, 0) -- (-0.85, 0.5) node [at end] {$x_2$};

\draw[dashed] (0, 0) -- (0, -1) node [at end] {$x_3$};

\end{tikzpicture}
\setlength{\belowcaptionskip}{-3pt}
\caption{Abstract tropical curve with one vertex and three marked, unbounded edges.}
\end{figure}
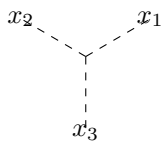

\begin{defn}
The \textit{combinatorial type} of an abstract tropical curve $(\Gamma, x_1, x_2, \ldots , x_n)$ is defined to be the homeomorphism class of $\Gamma$ relative $x_i$ (i.e. the data of $(\Gamma, x_1, x_2, \ldots , x_n)$ that map each $x_i$ to itself). Combinatorial type can also be thought of as the structure of the original graph, disregarding the lengths of the bounded edges.
\end{defn}

\begin{defn}
The \textit{moduli space} of abstract tropical curves with $n$ marked edges and genus $g$ and with all unbounded edges marked is a geometric space whose points represent isomorphism classes of such curves and is denoted by $\mathcal{M}_{g, n}$. The moduli space $\mathcal{M}_{g, n}$ is obtained by gluing together finitely many orthants $\mathbb{R}_{\ge 0}^m$, one orthant for each combinatorial type in the moduli space \cite{Brodsky2015}.
\end{defn}

\begin{exmp}
The moduli space $\mathcal{M}_{0,3}$ consists of a single point, as the only possible abstract tropical curve with genus $0$ and three marked edges is the one shown in Figure 1.
\end{exmp}

\begin{exmp}
The moduli space $\mathcal{M}_{0, 4}$ consists of three rays with a common vertex. Given an abstract tropical curve $(\Gamma, x_1, x_2, x_3, x_4)$ with four marked edges, there can be at most one bounded edge and at most two vertices in order to ensure each vertex has valence at least $3$. The abstract tropical curve must look like one of the following three curves (labeled $A$, $B$, $C$):

\begin{figure}[ht]
\centering
\begin{tikzpicture}

\draw (-0.5, 0) -- (0.5, 0);

\draw[dashed] (-0.5, 0) -- (-1.3, 0.5) node [at end, left] {$x_1$};

\draw[dashed] (-0.5, 0) -- (-1.3, -0.5) node [at end, left] {$x_2$};

\draw[dashed] (0.5, 0) -- (1.3, 0.5) node [at end, right] {$x_3$};

\draw[dashed] (0.5, 0) -- (1.3, -0.5) node [at end, right] {$x_4$};

\node at (0, -0.5) {$A$};

\end{tikzpicture}
\hspace{0.5cm}
\begin{tikzpicture}

\draw (-0.5, 0) -- (0.5, 0);

\draw[dashed] (-0.5, 0) -- (-1.3, 0.5) node [at end, left] {$x_1$};

\draw[dashed] (-0.5, 0) -- (-1.3, -0.5) node [at end, left] {$x_3$};

\draw[dashed] (0.5, 0) -- (1.3, 0.5) node [at end, right] {$x_2$};

\draw[dashed] (0.5, 0) -- (1.3, -0.5) node [at end, right] {$x_4$};

\node at (0, -0.5) {$B$};

\end{tikzpicture}
\hspace{0.5cm}
\begin{tikzpicture}

\draw (-0.5, 0) -- (0.5, 0);

\draw[dashed] (-0.5, 0) -- (-1.3, 0.5) node [at end, left] {$x_1$};

\draw[dashed] (-0.5, 0) -- (-1.3, -0.5) node [at end, left] {$x_4$};

\draw[dashed] (0.5, 0) -- (1.3, 0.5) node [at end, right] {$x_3$};

\draw[dashed] (0.5, 0) -- (1.3, -0.5) node [at end, right] {$x_2$};

\node at (0, -0.5) {$C$};

\end{tikzpicture}
\setlength{\belowcaptionskip}{-3pt}
\caption{Three types of abstract tropical curves.}
\end{figure}
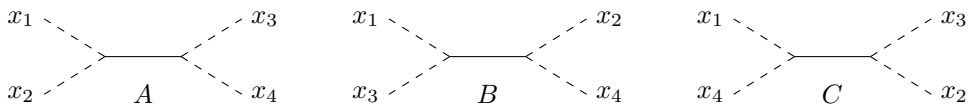

The only parameter that may vary besides the type of abstract tropical curve is the length of the single bounded edge, which can be any positive real number. Each ray in the moduli space corresponds to one of the above types, and the distance from a point on the ray to the vertex of the ray parameterizes the length of the bounded edge. Of course, the vertex common to all three rays represents the abstract tropical curve with one vertex, no bounded edges, and all marked edges attached to the one vertex.

\begin{figure}[ht]
\centering
\begin{tikzpicture}

\draw (0, 0) -- (0.85, 0.5);

\draw (0, 0) -- (-0.85, 0.5);

\draw (0, 0) -- (0, -1);

\draw[fill=black] (0, 0) circle (0.03 cm);

\end{tikzpicture}
\setlength{\belowcaptionskip}{-3pt}
\caption{The moduli space $\mathcal{M}_{0, 4}$.}
\end{figure}
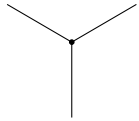
\end{exmp}

\begin{rem}
Caporaso proved the connectedness of the moduli space of abstract tropical curves in \cite{caporaso2011algebraic}. We build on these results by proving connectedness for a different type of moduli space, one that parameterizes planar tropical curves.
\end{rem}

\subsection{Planar Tropical Curves}

\begin{defn}
A \textit{planar tropical curve} consists of an $n$-marked abstract tropical curve \linebreak $(\Gamma, x_1, x_2, \ldots , x_n)$ along with a mapping $h$ taking the abstract tropical curve to $\mathbb{R}^2$. Planar tropical curves are denoted by $(\Gamma, h, x_1, x_2, \ldots , x_n)$, but we may refer to a planar tropical curve simply by its subgraph $\Gamma$. Unbounded edges in planar tropical curves can be marked, which means the edges are distinguishable, or unmarked, which means the edges are indistinguishable. In the plane, each edge is given an integer direction vector, which is a vector of the form $(a, b)$ where $a, b \in \mathbb{Z}$. For a flag $F$ we denote the integer direction vector of $F$ as $v(F)$. Edges of length $l$ are mapped to $a + v \cdot l$ where $a$ is some point in $\mathbb{R}^2$ and $v$ is the direction vector of the edge. All marked unbounded edges have direction $\vec{0}$ (the zero vector) and at each vertex a balancing condition is satisfied: the sum of the direction vectors of all flags at that vertex must be $\vec{0}$. 
\end{defn}

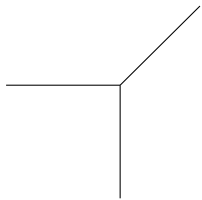
\begin{figure}[ht]
\centering
\begin{tikzpicture}[scale=1.5]

\draw (0, 0) -- (0.7, 0.7);

\draw (0, 0) -- (-1, 0);

\draw (0, 0) -- (0, -1);

\end{tikzpicture}
\setlength{\belowcaptionskip}{-3pt}
\caption{Planar tropical curve with one vertex and three unmarked unbounded edges.}
\end{figure}

\begin{defn}
By the definition of genus, the subgraph of a planar tropical curve $(\Gamma, h, x_1, x_2, \ldots , x_n)$ of genus $1$ contains exactly one cycle formed of vertices and bounded edges. It is even possible for this cycle to consist of one vertex with one bounded edge of direction $\vec{0}$, a self-loop (note that length of an edge can still be nonzero even if its direction vector is $\vec{0}$). The \textit{unique cycle} of a genus $1$ planar tropical curve $\Gamma$ is denoted by $\Gamma_c$ and referred to simply as ``the cycle''. 
\end{defn}

\begin{defn}
The \textit{combinatorial type} of a planar tropical curve consists of the combinatorial type of its abstract tropical curve and the directions of all of its edges. 
\end{defn}

\begin{defn}
The \textit{degree} of a planar tropical curve is the multiset of direction vectors of all unmarked unbounded edges. If the multiset of direction vectors consists of $(1, 1), (-1, 0), (0, -1)$ each occurring $d$ times, then we simply say the degree is $d$. For example, the curve in Figure 4 has degree $d = 1$. 
\end{defn}

\subsection{Moduli Space of Planar Curves}

\begin{defn}
The \textit{moduli space of planar tropical curves} $\mathcal{M}_{=1, n}(d)$ is defined as the geometric space whose points correspond to isomorphism classes of planar tropical curves of degree $d$ and genus $g = 1$ with $n$ marked edges. Note that since $d$ and $n$ are fixed, the number of unmarked unbounded edges and the number of marked unbounded edges are predetermined. However, the number of bounded edges and vertices may vary. 
\end{defn}

\begin{defn}
We may also define the moduli space of a particular combinatorial type $\alpha$. The \textit{moduli space of $\alpha$} is denoted by $\mathcal{M}_{=1, n}^{\alpha}(d)$, and is defined to be the subset of $\mathcal{M}_{=1, n}(d)$ consisting of all planar tropical curves of type $\alpha$. The topological structure of $\mathcal{M}_{=1, n}^{\alpha}(d)$ can be envisioned as an open convex polyhedron in a real vector space (see Lemma 3.1 in \cite{kerber2006counting}).
\end{defn}

\begin{rem}
Definition 3.2 in \cite{chan2017lectures} says that the moduli space $\mathcal{M}_{=1, n}(d)$ is the union of the $\mathcal{M}_{=1, n}^{\alpha}(d)$ for all different combinatorial types $\alpha$, with the boundaries of $\mathcal{M}_{=1, n}^{\alpha}(d)$ corresponding to planar tropical curves with fewer bounded edges.
\end{rem}

\begin{defn}
Two planar tropical curves $\Gamma_1, \Gamma_2 \in \mathcal{M}_{=1, n} (d)$ are \textit{connected} if there exists a path (Definition 3.28 in \cite{armstrong2013basic}) between the points corresponding to $\Gamma_1, \Gamma_2$ in the moduli space $\mathcal{M}_{=1, n} (d)$.
\end{defn}

\section{Preliminaries: Properties of Planar Tropical Curves}

We define a special map that allows us to deform a planar tropical curve into another curve with possibly fewer edges. We then use this map to simplify the planar tropical curves that we work with and narrow our scope to a certain subset of planar tropical curves. Next, we find an upper bound on the absolute values of the coordinates of the direction vectors of a genus $1$ planar tropical curve with degree $d$. Finally, we prove that the moduli space $\mathcal{M}_{=1, n}(1)$ is connected.

\subsection{Degeneration and Regeneration}

\begin{lemma}
\label{lemma3.1}
Let $C\in \mathcal{M}^{\alpha}_{=1,n}(d)$ be a planar tropical curve of degree $d$ with combinatorial type $\alpha$, and let $\Gamma_0^1=\{e_1,e_2,\ldots,e_k\}$ be all the bounded edges of $C$. We let $l_i=l(e_i)$ be the length of edge $e_i$. Assume we have a continuous family of tropical curves $f:(0,1)\to \mathcal{M}^{\alpha}_{=1,n}(d)$, and a subset $I\subset \{1,\ldots,n\}$ such that for $i\notin I$, $l_i(t)\to l_i'$ when $t\to 0$, and for $j\in I$, $l_j(t)\to 0$ when $t\to 0$. Here $l_i'$ are positive numbers for all $i\in I$. Then there exists a continuous map $\bar{f}:[0,1)\to \mathcal{M}_{=1,n}(d)$ s.t. $\bar{f}(t)=f(t)$ for $t\in(0,1)$ and $\bar{f}(0)$ has possibly fewer edges.
\end{lemma}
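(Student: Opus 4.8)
The plan is to build $\bar f(0)$ explicitly as the planar tropical curve obtained from the family by contracting exactly the edges whose lengths vanish, and then to verify continuity using the polyhedral description of the cells recalled earlier (each $\mathcal{M}^{\alpha}_{=1,n}(d)$ is an open convex polyhedron, cf. Lemma 3.1 of \cite{kerber2006counting}, and $\mathcal{M}_{=1,n}(d)$ is the union of these cells glued along boundary faces corresponding to curves with fewer edges, cf. \cite{chan2017lectures}). First I would form a combinatorial type $\beta$ from $\alpha$ by collapsing every edge $e_j$ with $j\in I$: each such edge shrinks to a point, so its two endpoints are identified, while the edges $e_i$ with $i\notin I$ survive and retain their positive limit lengths $l_i'$. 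Writing $h_t$ for the map to $\mathbb{R}^2$ attached to $f(t)$, the surviving lengths and the position of a chosen root vertex all converge as $t\to 0$, so $h_t$ converges to a limit map $h_0$, and I would take $\bar f(0)=(\Gamma', h_0, x_1,\ldots,x_n)$ to be the curve of type $\beta$ with these limit data.

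Next I would check that $\bar f(0)$ is a genuine point of $\mathcal{M}_{=1,n}(d)$. Balancing is preserved under contraction: when an edge $e_j$ with flags $F,F'$ is collapsed, both flags disappear from the merged vertex, and since $v(F)=-v(F')$ their contributions cancel, so the sum of the remaining flag directions at the new vertex equals the sum of the two old balancing sums, namely $\vec{0}$; iterating over all $j\in I$ keeps every vertex balanced. The degree is unchanged because contraction only touches bounded edges, leaving the multiset of directions of the unmarked unbounded edges equal to $d$. For the genus, writing $g=1+E-V$, collapsing a set of edges that forms a forest lowers $E$ and $V$ by the same amount and leaves $g=1$; the one case to rule out is the full collapse of the cycle $\Gamma_c$, which would force $g=0$. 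Thus the decisive hypothesis is that at least one edge of $\Gamma_c$ keeps a positive limit length, i.e.\ $\Gamma_c\not\subseteq\{e_j : j\in I\}$, under which $\bar f(0)$ remains genus $1$ (in the extreme case the cycle degenerates to a single self-loop, which is still genus $1$).

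Finally I would establish continuity of $\bar f$ on $[0,1)$. On $(0,1)$ this is immediate from $\bar f=f$, so the only issue is continuity at $t=0$. Here I would use that the cell $\mathcal{M}^{\alpha}_{=1,n}(d)$ is the relative interior of a convex polyhedron $\overline{P_\alpha}$ whose coordinates are the bounded-edge lengths (subject to the linear closing-up relation for the cycle, together with the position data), and that $\mathcal{M}_{=1,n}(d)$ is glued from these polyhedra so that the face $\{l_j=0 : j\in I\}$ of $\overline{P_\alpha}$ is identified with the cell $\mathcal{M}^{\beta}_{=1,n}(d)$ of the contracted type. Because every coordinate $l_i(t)$ converges as $t\to 0$, the coordinate vector of $f(t)$ converges inside the closed polyhedron $\overline{P_\alpha}$ to the point representing $\bar f(0)$ on that face; since convergence in $\overline{P_\alpha}$ refines the topology of the glued complex, it follows that $\bar f(t)\to\bar f(0)$ in $\mathcal{M}_{=1,n}(d)$.

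The step I expect to be the main obstacle is matching the boundary face of $\overline{P_\alpha}$ with the cell $\mathcal{M}^{\beta}_{=1,n}(d)$ of the contracted type and checking that the two topologies agree there, since this is what actually crosses from one combinatorial stratum to another. The genus bookkeeping above — ensuring the cycle is not annihilated — is the concrete condition that makes this crossing land inside the genus-$1$ moduli space rather than outside it, so I would state it carefully rather than leave it implicit. The balancing and degree computations, by contrast, are routine once the contraction is set up.
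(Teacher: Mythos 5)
Your proof is correct and follows essentially the same route as the paper's: both rely on the fact that $\mathcal{M}^{\alpha}_{=1,n}(d)$ is an open convex polyhedron whose boundary faces are glued to cells of contracted combinatorial types, so the coordinate-wise convergence of the lengths forces $f(t)$ to converge to a point $\bar f(0)$ on the face $\{l_j=0: j\in I\}$ (the paper simply cites Lemma 3.1 and Proposition 3.2 of Kerber--Markwig for the verifications of balancing, degree, and the identification of the limit that you carry out by hand). The one substantive point you add that the paper leaves implicit is the requirement that $\Gamma_c\not\subseteq\{e_j: j\in I\}$ so that the limit curve still has genus $1$ and hence lies in $\mathcal{M}_{=1,n}(d)$; this hypothesis is indeed assumed silently throughout the paper (every degeneration used later retains at least one cycle edge, e.g.\ the self-loop of the trivial curve), and you are right that it deserves to be stated.
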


\begin{proof}
Recall the proof of Lemma 3.1 in \cite{kerber2006counting}, the moduli space $\mathcal{M}^{\alpha}_{=1,n}(d)$ is an open convex polyhedron in a real vector space. Thus, $f(t)$ has a limit for $l_i(t)=l_i'$ and $l_j(t)=0$ when $t\to 0$. We denote this limit by $\bar{f}(0)$. By Proposition 3.2 of \cite{kerber2006counting}, we know that $\bar{f}(0)$ represents a tropical curve with fewer edges.
\end{proof}

\begin{defn} 
Consider a planar tropical curve $\Gamma$ of degree $d$, genus $g$, and combinatorial type $\alpha$, along with another tropical curve $\Gamma'$  of genus $g$ and degree $d$ with fewer bounded edges than $\Gamma$. If there exists a continuous map $f:[0,1]\to \mathcal{M}_{=1,n}(d)$ such that $f(1)=\Gamma$, $f(t)$ has combinatorial type $\alpha$ for $0<t\leq 1$, and $f(0)=\Gamma'$, we say the continuous map $f$ is a $\textit{degeneration}$ of $\Gamma$. The inverse of $f$, the function $f^{1-t}$, will be called a \textit{regeneration} of $\Gamma'$.
\end{defn}

\begin{exmp}
Through the following degeneration of the graph $\Gamma$, we shrink edge $e$ while expanding the two edges that $e$ is connected to (the drawn arrow indicates the direction of shrinking), as seen in the middle figure. Finally, we obtain $\Gamma'$, the curve on the right:
\end{exmp}

\begin{figure}[ht]
\centering
\begin{tikzpicture}

\draw (1.6, 0) -- (2, 0);

\draw (2, 0) -- (2, 2);

\draw (2, 2) -- (0, 2);

\draw (0, 2) -- (0, 1.6);

\draw (0, 1.6) -- node[midway, left] {$e$} (1.6, 0);

\draw (2, 2) -- (2.5, 2.5);

\draw (2, 0) -- (2.5, 0.5);

\draw (1.6, 0) -- (1.6, -0.7);

\draw (0, 1.6) -- (-0.7, 1.6);

\draw (0, 2) -- (0.5, 2.5);

\end{tikzpicture}
\hspace{0.5cm}
\begin{tikzpicture}

\draw (1.6, 0) -- (2, 0);

\draw (2, 0) -- (2, 2);

\draw (2, 2) -- (0, 2);

\draw (0, 2) -- (0, 1.6);

\draw (1.6, 0) -- (0, 1.6);

\draw[dashed] (0, 0) -- (1.6, 0);

\draw[dashed] (0, 0) -- (0, 1.6);

\draw[dashed] (0, 1.2) -- (1.2, 0);

\draw[dashed] (0, 0.8) -- (0.8, 0);

\draw[dashed] (0, 0.4) -- (0.4, 0);

\draw[-latex] (1, 1) -- (0.1, 0.1);

\draw (2, 2) -- (2.5, 2.5);

\draw (2, 0) -- (2.5, 0.5);

\draw (1.6, 0) -- (1.6, -0.7);

\draw (0, 1.6) -- (-0.7, 1.6);

\draw (0, 2) -- (0.5, 2.5);

\draw[dashed] (1.2, 0) -- (1.2, -0.7);

\draw[dashed] (0, 1.2) -- (-0.7, 1.2);

\draw[dashed] (0.8, 0) -- (0.8, -0.7);

\draw[dashed] (0, 0.8) -- (-0.7, 0.8);

\draw[dashed] (0.4, 0) -- (0.4, -0.7);

\draw[dashed] (0, 0.4) -- (-0.7, 0.4);

\draw[dashed] (0, 0) -- (0, -0.7);

\draw[dashed] (0, 0) -- (-0.7, 0);

\end{tikzpicture}
\hspace{0.5cm}
\begin{tikzpicture}

\draw (0, 0) -- (2, 0);

\draw (2, 0) -- (2, 2);

\draw (2, 2) -- (0, 2);

\draw (0, 2) -- (0, 0);

\node at (0.2, 0.2) {$e$};

\draw (2, 2) -- (2.5, 2.5);

\draw (2, 0) -- (2.5, 0.5);

\draw (0, 2) -- (0.5, 2.5);

\draw (0, 0) -- (0, -0.7);

\draw (0, 0) -- (-0.7, 0);

\end{tikzpicture}
\setlength{\belowcaptionskip}{-3pt}
\caption{A degeneration of $\Gamma$.}
\end{figure}
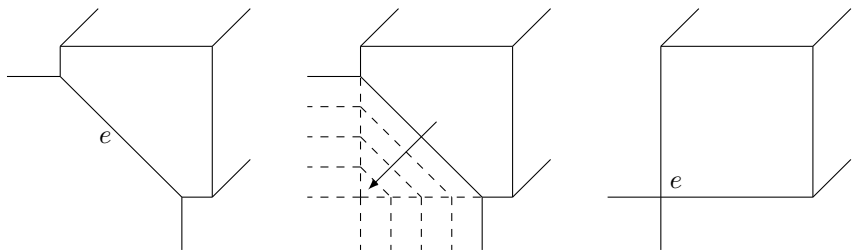

\begin{defn}
At the end of a degeneration $f$ from $\Gamma$ to $\Gamma'$, a bounded edge $e$ in $\Gamma$ whose length becomes $0$ is said to be \textit{shrunk}. A shrunken edge is mapped to a point in $\Gamma'$. The two vertices $V_1, V_2 \in \Gamma$ that are connected to $e$ in $\Gamma$ and are mapped to the same location in $\mathbb{R}^2$ by $f$ are said to be \textit{combined}. Suppose $f$ combines $V_1, V_2$ into a single vertex $V' \in \Gamma'$. In the regeneration $g = f^{1-t}$ that takes $\Gamma'$ to $\Gamma$, $V'$ is said to be $\textit{split}$ into $V_1$ and $V_2$, and edge $e$ is said to be a \textit{regenerated} edge.
\end{defn}



\begin{prop}
\label{prop3.1}
All planar tropical curves $\Gamma \in \mathcal{M}_{=1, n}(d)$ are connected to a planar tropical curve with no bounded edges other than those in $\Gamma_c$, the cycle of $\Gamma$.
\end{prop}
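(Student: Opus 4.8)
The plan is to collapse, in a single degeneration, every bounded edge of $\Gamma$ that does not lie on the cycle $\Gamma_c$. Because $\Gamma$ has genus $1$, deleting the edges of $\Gamma_c$ from its subgraph leaves a forest, so the bounded edges split into the cycle edges together with a collection of ``tree'' edges hanging off the vertices of $\Gamma_c$. Let $I$ index these non-cycle edges and let $i \notin I$ index the cycle edges. First I would build a family $f\colon (0,1] \to \mathcal{M}^{\alpha}_{=1,n}(d)$ by keeping every direction vector and every cycle-edge length fixed while rescaling each non-cycle length $l_j$ to $t\,l_j$, with $f(1) = \Gamma$. For every $t \in (0,1]$ all directions and the abstract homeomorphism type are unchanged and all lengths stay positive, so $f(t)$ really does have combinatorial type $\alpha$; since the coordinates on $\mathcal{M}^{\alpha}_{=1,n}(d)$ are exactly the edge lengths, $f$ is continuous.

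Next I would apply Lemma~\ref{lemma3.1} with this $I$: the non-cycle lengths satisfy $l_j(t) \to 0$ and the cycle lengths satisfy $l_i(t) \to l_i' > 0$ as $t \to 0$, so $f$ extends continuously to $t = 0$, and the limit $\bar{f}(0) = \Gamma'$ is a curve with fewer bounded edges in which every non-cycle edge has shrunk and each tree has collapsed onto its attachment point on $\Gamma_c$. Thus $\Gamma'$ has no bounded edges outside $\Gamma_c$. By construction $f$ is a degeneration of $\Gamma$, hence a path in $\mathcal{M}_{=1,n}(d)$ joining $\Gamma$ to $\Gamma'$, which is precisely the asserted connectedness.

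The substantive content is verifying that $\Gamma'$ is a legitimate planar tropical curve of genus $1$ and degree $d$. Contracting the non-cycle edges merges vertices; at a merged vertex the flags of the contracted edges occur in opposite-direction pairs, so the balancing sum there equals the sum of the balancing sums of the merged vertices, namely $\vec{0}$, and the balancing condition survives. Merging two vertices of valence $\geq 3$ across a contracted edge yields valence $\geq 4 \geq 3$, so the valence-$3$ requirement is preserved. Since no unbounded edge is altered, the multiset of unbounded directions, and hence the degree $d$, is unchanged. Finally, because we contract only forest edges and never a cycle edge, the unique cycle $\Gamma_c$ survives intact and $1 + E - V$ stays equal to $1$.

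I expect the main point to watch is the genus computation: the genus-$1$ hypothesis is exactly what guarantees that the non-cycle edges form a forest, so that contracting them cannot open the cycle or create a new one, and everything else is bookkeeping. If a cleaner justification of the simultaneous collapse is wanted, I would instead shrink one leaf edge of the forest at a time, inducting on the number of non-cycle edges and concatenating the resulting degeneration paths; each single-edge contraction is a degeneration whose valid limit is supplied directly by Lemma~\ref{lemma3.1}, and the base case (no bounded edges outside $\Gamma_c$) is immediate.
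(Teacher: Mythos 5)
Your proposal is correct and is essentially the paper's argument: the paper's proof is exactly your stated fallback, inducting on the number of bounded edges outside $\Gamma_c$ and shrinking one at a time via Lemma~\ref{lemma3.1}. Your primary version, collapsing all the forest edges simultaneously, is equally licensed by that lemma as stated (it permits an arbitrary subset $I$ of lengths to tend to zero at once), and your balancing/valence/genus bookkeeping is a more explicit check than the paper gives, which simply cites the lemma for the fact that the limit is a tropical curve with fewer edges.
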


\begin{proof}
We can induct on the number of bounded edges that are not in $\Gamma_c$. If there are no bounded edges outside of $\Gamma_c$, we are done. Else, we may apply Lemma~\ref{lemma3.1} to shrink a bounded edge that is not in the cycle $\Gamma_c$ (this is possible because shrinking an edge not in $\Gamma_c$ does not change directions of other edges in $\Gamma$) and then use the induction hypothesis. Below is an illustration of shrinking a bounded edge not part of $\Gamma_c$: 

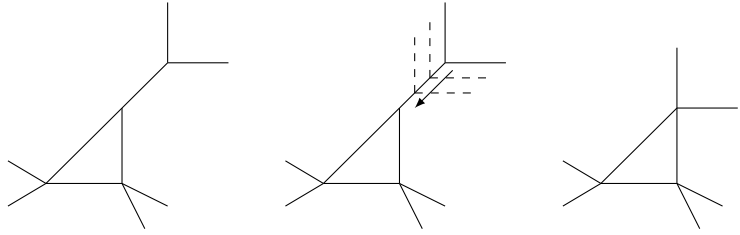
\begin{figure}[ht]
\centering
\begin{tikzpicture}

\draw (-1, 0) -- (0, 0);

\draw (-1, 0) -- (0, 1);

\draw (0, 0) -- (0, 1);

\draw (-1.5, 0.3) -- (-1, 0);

\draw (-1.5, -0.3) -- (-1, 0);

\draw (0, 0) -- (0.6, -0.3);

\draw (0, 0) -- (0.3, -0.6);

\draw (0, 1) -- (0.6, 1.6);

\draw (0.6, 1.6) -- (1.4, 1.6);

\draw (0.6, 1.6) -- (0.6, 2.4);

\end{tikzpicture}
\hspace{0.5cm}
\begin{tikzpicture}

\draw (-1, 0) -- (0, 0);

\draw (-1, 0) -- (0, 1);

\draw (0, 0) -- (0, 1);

\draw (-1.5, 0.3) -- (-1, 0);

\draw (-1.5, -0.3) -- (-1, 0);

\draw (0, 0) -- (0.6, -0.3);

\draw (0, 0) -- (0.3, -0.6);

\draw (0, 1) -- (0.6, 1.6);

\draw (0.6, 1.6) -- (1.4, 1.6);

\draw (0.6, 1.6) -- (0.6, 2.4);

\draw[dashed] (0.4, 1.4) -- (1.2, 1.4);

\draw[dashed] (0.4, 1.4) -- (0.4, 2.2);

\draw[dashed] (0.2, 1.2) -- (1, 1.2);

\draw[dashed] (0.2, 1.2) -- (0.2, 2);

\draw[-latex] (0.7, 1.5) -- (0.2, 1);

\end{tikzpicture}
\hspace{0.5cm}
\begin{tikzpicture}

\draw (-1, 0) -- (0, 0);

\draw (-1, 0) -- (0, 1);

\draw (0, 0) -- (0, 1);

\draw (-1.5, 0.3) -- (-1, 0);

\draw (-1.5, -0.3) -- (-1, 0);

\draw (0, 0) -- (0.6, -0.3);

\draw (0, 0) -- (0.3, -0.6);

\draw (0, 1) -- (0.8, 1);

\draw (0, 1) -- (0, 1.8);

\end{tikzpicture}
\setlength{\belowcaptionskip}{-3pt}
\caption{Shrinking a bounded edge not in $\Gamma_c$.}
\end{figure}
\end{proof}

\begin{rem}
We can use Proposition~\ref{prop3.1} to introduce a further simplification to our planar tropical curves by assuming that all bounded edges are part of $\Gamma_c$ (note that this implies all vertices are part of $\Gamma_c$ as well). By Proposition~\ref{prop3.1}, all planar tropical curves $\Gamma$ of genus $1$ and degree $d$ are connected to some planar tropical curve $\Gamma'$ with all bounded edges in $\Gamma'_c$, so to prove the connectedness of $\mathcal{M}_{=1, n} (d)$, we simply need to prove that the space of such curves $\Gamma'$ is connected. In the rest of the paper, we assume all planar tropical curves $\Gamma$ have all bounded edges part of the cycle $\Gamma_c$.
\end{rem}

\begin{defn}
For a vertex $V$ in a planar tropical curve $\Gamma$, let the \textit{UB-sum} of $V$ be the sum of the direction vectors of all unbounded edges attached to $V$.
\end{defn}

\begin{defn}
In a genus $1$ planar tropical curve $\Gamma$ with all bounded edges part of $\Gamma_c$, we define the vector $(x_i, y_i)$ as the UB-sum of each vertex $V_i$ in the cycle $\Gamma_c$. The \textit{effective degree} of this planar tropical curve is defined as $d' = \max(\max(|x_i|), \max(|y_i|))$ over all $x_i, y_i$, the largest absolute value of any of the $x$ or $y$ coordinates of the UB-sums of the vertices. Additionally, let the \textit{effective degree of the $x$ coordinates} be $d'_x = \max{(|x_i|)}$ over all $x_i$ and let the \textit{effective degree of the $y$ coordinates} be $d'_y = \max{(|y_i|)}$ over all $y_i$.
\end{defn}

\subsection{Shape of \texorpdfstring{$\Gamma_c$}{TEXT} and Marked Edges}

Recall that for any genus $1$ planar tropical curve $\Gamma$, the subgraph of $\Gamma$ has a unique cycle,  denoted by $\Gamma_c$.

\begin{prop}
\label{prop3.2}
Any genus $1$ planar tropical curve $\Gamma$ with $n$ marked unbounded edges and degree $d \ge 1$ is connected to another curve $\Gamma'$ such that all vertices $V$ in the curve $\Gamma'$ are connected to at least one unmarked, unbounded edge.
\end{prop}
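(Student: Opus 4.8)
The plan is to induct on the number of \emph{bad} vertices of $\Gamma$, where I call a vertex \emph{bad} if every unbounded edge attached to it is marked (equivalently, its UB-sum is $\vec 0$) and \emph{good} otherwise. If $\Gamma$ has no bad vertices we are already done, so suppose it has at least one. Since $d \ge 1$, the curve carries the $3d \ge 3$ unmarked unbounded edges of its degree, each attached to some vertex of $\Gamma_c$; hence $\Gamma$ always has at least one good vertex. The goal of a single inductive step is to produce a degeneration (hence, by Lemma~\ref{lemma3.1}, a path in $\mathcal{M}_{=1,n}(d)$) from $\Gamma$ to a curve of the same genus and degree but with strictly fewer bad vertices; concatenating these finitely many paths will then connect $\Gamma$ to a curve $\Gamma'$ all of whose vertices are good.

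First I would record the local picture at a bad vertex $V$. Under our standing assumption every bounded edge lies in $\Gamma_c$, so $V$ meets exactly two cycle edges $e,e'$. By the balancing condition the flag directions at $V$ sum to $\vec 0$, and the marked edges contribute $\vec 0$, so the flags of $e$ and $e'$ at $V$ are opposite; that is, $e$ and $e'$ are collinear and $\Gamma_c$ passes straight through $V$. This collinearity is exactly what legitimizes the following \emph{slide}: shrink $e$ toward a neighbor $W$ while simultaneously lengthening $e'$ so that $l(e)+l(e')$ stays constant. Because $e$ and $e'$ share the same direction along the cycle, their contributions to the closing condition of $\Gamma_c$ trade off exactly, so the curve remains closed and balanced for every $t$, the combinatorial type is unchanged while $l(e)>0$, and genus and degree are preserved. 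As $l(e)\to 0$ the vertices $V$ and $W$ are combined; by Lemma~\ref{lemma3.1} the family extends continuously to the limit curve, in which the marked edges of $V$ have been transferred to $W$ and one cycle edge has been removed.

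It remains to check that each slide lowers the number of bad vertices. If $W$ is good, the combined vertex inherits $W$'s unmarked edges and is good, so the count drops by one; if $W$ is bad, the combined vertex is again bad, but two bad vertices have become one, so the count still drops by one. Thus the number of bad vertices strictly decreases and the procedure terminates at the desired $\Gamma'$. The main point to verify carefully is that the slide really is a path \emph{inside} $\mathcal{M}_{=1,n}(d)$: one must confirm that removing a cycle edge in this way never drops the genus below $1$ — the collinear length trade-off is precisely what prevents the whole cycle from collapsing, in contrast to shrinking a genuine (non-straight) corner, which would force the entire cycle to contract — and one must allow the degenerate small-cycle outcomes, such as when $\Gamma_c$ becomes a two-edge cycle or a self-loop whose image is a doubly covered segment. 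These remain valid points of the moduli space, since the map $h$ need not be injective, so they pose no obstruction.
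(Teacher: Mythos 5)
Your proposal is correct and follows essentially the same route as the paper: induct on the number of vertices meeting only bounded and marked unbounded edges, use the balancing condition (marked edges contribute $\vec 0$) to see that the two cycle edges at such a vertex are collinear, and shrink one of them via Lemma~\ref{lemma3.1} to merge the bad vertex with a neighbor, strictly decreasing the count. Your additional observations — that $d\ge 1$ guarantees at least one good vertex, and that the collinear length trade-off keeps the cycle closed — are sound refinements of the same argument.
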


\begin{proof}
Because $\Gamma$ has no bounded edges not in $\Gamma_c$, all vertices $V$ in $\Gamma$ are part of $\Gamma_c$, so each $V$ is connected to at most two bounded edges. Thus, it suffices to show that we may apply a series of degenerations to $\Gamma$ so that there is no vertex $V$ in $\Gamma$ that is connected to only bounded edges and marked unbounded edges. We induct on the number of such vertices. If there are no such vertices, then $\Gamma$ itself satisfies the desired property. We then take $\Gamma' = \Gamma$ and we're done. For the inductive step, we will show that given a vertex $V$ connected to only bounded edges and marked unbounded edges, we may shrink a bounded edge connected to $V$, reducing the number of vertices connected to only bounded edges and marked unbounded edges by one.

If a vertex $V \in \Gamma_c$ is connected to two bounded edges $e_1, e_2$ and only marked unbounded edges, then $e_1$ and $e_2$ must have the same direction by applying the balancing condition to vertex $V$. This implies that we may apply a degeneration process to shrink $e_1$ or $e_2$. This degeneration process combines vertex $V$ with another vertex, so the number of vertices connected to only bounded edges and marked unbounded edges decreases by one, completing our induction (the dotted edges below are marked, unbounded edges).

\begin{figure}[ht]
\centering
\begin{tikzpicture}[scale=0.7]

\draw (0, 0) -- (2, 1);

\draw (2, 1) -- (1, 3);

\draw (1, 3) -- (0, 0);

\draw (0, 0) -- (-1, 0);

\draw (0, 0) -- (0, -1);

\draw (2, 1) -- (2.7, 1.7);

\draw (1, 3) -- (0, 3);

\filldraw (1.5, 2) circle (0.03);

\draw[dashed] (1.5, 2) -- (2.5, 2.2);

\draw[dashed] (1.5, 2) -- (2.5, 2);

\node at (1.5, 2.6) {$e_1$};

\node at (2, 1.4) {$e_2$};

\end{tikzpicture}
\hspace{0.5cm}
\begin{tikzpicture}[scale=0.7]

\draw (0, 0) -- (2, 1);

\draw (2, 1) -- (1, 3);

\draw (1, 3) -- (0, 0);

\draw (0, 0) -- (-1, 0);

\draw (0, 0) -- (0, -1);

\draw (2, 1) -- (2.7, 1.7);

\draw (1, 3) -- (0, 3);

\filldraw (1.5, 2) circle (0.03);

\filldraw (1.33, 2.33) circle (0.03);

\filldraw (1.167, 2.667) circle (0.03);

\draw[dashed] (1.5, 2) -- (2.5, 2.2);

\draw[dashed] (1.5, 2) -- (2.5, 2);

\draw[dashed] (1.33, 2.33) -- (2.33, 2.53);

\draw[dashed] (1.33, 2.33) -- (2.33, 2.33);

\draw[dashed] (1.167, 2.667) -- (2.167, 2.867);

\draw[dashed] (1.167, 2.667) -- (2.167, 2.667);

\draw[-latex] (1.48, 1.8) -- (1, 2.8);

\node at (1.3, 3) {$e_1$};

\node at (1.8, 1.8) {$e_2$};

\end{tikzpicture}
\hspace{0.5cm}
\begin{tikzpicture}[scale=0.7]

\draw (0, 0) -- (2, 1);

\draw (2, 1) -- (1, 3);

\draw (1, 3) -- (0, 0);

\draw (0, 0) -- (-1, 0);

\draw (0, 0) -- (0, -1);

\draw (2, 1) -- (2.7, 1.7);

\draw (1, 3) -- (0, 3);

\filldraw (1, 3) circle (0.03);

\draw[dashed] (1, 3) -- (2, 3.2);

\draw[dashed] (1, 3) -- (2, 3);

\node at (1.7, 2.1) {$e_2$};

\end{tikzpicture}
\setlength{\belowcaptionskip}{-3pt}
\caption{Shrinking edge $e_1$.}
\end{figure}
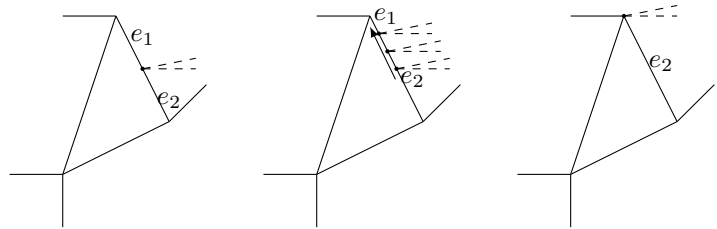

\end{proof}

\begin{prop}
\label{prop3.3}
Consider a degeneration or regeneration $f$ that maps a genus $1$ planar tropical curve $\Gamma$ to a different curve $\Gamma'$ such that each vertex in $\Gamma$ and $\Gamma'$ is connected to at least one unmarked unbounded edge. We may apply a similar degeneration/regeneration $f^*$ that maps a curve $\Gamma^*$ to $\Gamma^{*'}$, where $\Gamma^*$ is identical to $\Gamma$ but has no marked unbounded edges and similarly for $\Gamma^{*'}$ and $\Gamma'$.
\end{prop}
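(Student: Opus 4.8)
The plan is to exploit the single structural fact that every marked unbounded edge carries the direction vector $\vec{0}$, so that marked edges are invisible to both the balancing condition and the length dynamics of a degeneration or regeneration. First I would form $\Gamma^*$ and $\Gamma^{*'}$ explicitly by deleting every marked unbounded edge from $\Gamma$ and $\Gamma'$ respectively, leaving all bounded edges, all unmarked unbounded edges, their lengths, and their direction vectors untouched. The first thing to verify is that $\Gamma^*$ and $\Gamma^{*'}$ are genuine planar tropical curves of degree $d$ and genus $1$. The balancing condition is immediate: at any vertex the sum of the flag directions is unchanged after deleting flags of direction $\vec{0}$, so it remains $\vec{0}$. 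The genus is unaffected because deleting unbounded edges alters neither the number of bounded edges nor the number of vertices, so the cycle $\Gamma_c$ survives intact, and the degree is unchanged since only marked edges are removed.

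The one substantive point is the valence requirement that each vertex have valence at least $3$, and this is exactly where the hypothesis is used. Because all bounded edges lie on $\Gamma_c$, every vertex carries its two cycle flags; by hypothesis every vertex of $\Gamma$ and of $\Gamma'$ is also connected to at least one unmarked unbounded edge, which is \emph{not} deleted. Hence after removing the marked edges each vertex retains at least its two cycle flags together with a surviving unmarked unbounded edge, keeping its valence at least $3$. It follows that $\Gamma^* \in \mathcal{M}_{=1, 0}(d)$ and $\Gamma^{*'} \in \mathcal{M}_{=1, 0}(d)$.

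Next I would define $f^*$ by transporting the length data of $f$ verbatim. Writing $f$ as the continuous family whose bounded edges have lengths $l_i(t)$, I set $f^*(t)$ to be the curve with the same combinatorial skeleton as $\Gamma^*$ for $0 < t \le 1$, the same bounded-edge lengths $l_i(t)$, and the marked edges simply absent. Because the marked edges never influenced the positions of vertices or the directions of any other edge, the vertices of $f^*(t)$ sit at the same points of $\mathbb{R}^2$ as the corresponding vertices of $f(t)$, and $f^*$ inherits continuity directly from the continuity of $f$ and of the length functions $l_i$. When $f$ shrinks an edge and combines two vertices at $t = 0$ (or, in the regeneration case $f^{1-t}$, splits a vertex and grows an edge), the identical length collapse performed on $\Gamma^*$ combines or splits the corresponding vertices, and the resulting endpoint is exactly $\Gamma^{*'}$.

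The main obstacle I anticipate is not any single computation but the bookkeeping needed to confirm the valence bound is preserved \emph{throughout} the family, not merely at the two endpoints. For this one observes that every intermediate curve $f(t)$ with $0 < t \le 1$ shares the combinatorial type $\alpha$ of $\Gamma$, hence has the same vertices, each still carrying an unmarked unbounded edge, so that $f^*(t)$ is a legitimate point of $\mathcal{M}_{=1, 0}(d)$ for every $t$. Once this is in place, $f^*$ is a degeneration (respectively regeneration) from $\Gamma^*$ to $\Gamma^{*'}$ of exactly the same type as $f$, which completes the argument.
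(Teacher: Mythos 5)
Your proposal is correct and follows essentially the same route as the paper: both arguments rest on the single observation that marked unbounded edges have direction $\vec{0}$ and therefore do not affect the balancing condition or any other edge's direction, so the degeneration/regeneration transports verbatim to the curve with the marked edges removed. Your write-up is somewhat more careful than the paper's, in that you explicitly verify the valence-$3$ condition (which is precisely where the hypothesis that every vertex carries an unmarked unbounded edge is needed) and the continuity of the transported family, points the paper leaves implicit.
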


\begin{proof}
Recall that marked unbounded edges in planar tropical curves must have direction $(0, 0)$, so marked unbounded edges do not change the directions of any bounded or unmarked unbounded edges in $\Gamma_c$, as they do not change the balancing condition at each vertex. Thus, with any degeneration $f$, we can apply a similar degeneration $f^*$ that acts on a curve $\Gamma^*$ similar to $\Gamma$ but without marked unbounded edges. For a regeneration $f$ that splits a vertex $V$, we may distribute the marked unbounded edges at $V$ between the two new vertices formed however we want without changing the direction vectors of any other edges. Thus, the proposition is true for regenerations $f$ as well.
\end{proof}

\begin{rem}
When proving the connectedness of $\mathcal{M}_{=1, n}(d)$, we will prove that this space is path connected by applying a series of degenerations and regenerations to turn any curve $\Gamma \in \mathcal{M}_{=1, n}(d)$ into the planar tropical curve consisting of only one vertex $V$, a single bounded edge with direction $\vec{0}$ connected to $V$, and all other unbounded edges connected to $V$.

Proposition~\ref{prop3.2} and Proposition~\ref{prop3.3} allow us to prove the connectedness of $\mathcal{M}_{=1, n}(d)$ by proving that the space of of curves with no marked unbounded edges $\mathcal{M}_{=1, 0}(d)$ is connected, as Proposition~\ref{prop3.3} states that any series of degenerations and regenerations valid for a curve with no marked edges will apply to curves with marked edges.

In the rest of this paper, we assume that our planar tropical curves have no marked unbounded edges.
\end{rem}

\subsection{Bounding Direction Vectors of Bounded Edges}

\begin{lemma}
\label{lemma3.2}
In a genus $1$ planar tropical curve $(\Gamma, h, x_1, \ldots , x_n)$ of degree $d \ge 1$ with $k$ edges in $\Gamma_c$, the direction vector of each edge $e_i$ with $1 \le i \le k$ is $(a_i, b_i)$, with $|a_i|, |b_i| < d$.
\end{lemma}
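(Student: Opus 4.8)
The plan is to turn the balancing condition, read around the cycle $\Gamma_c$, into a telescoping recursion for the edge directions, and then pin down the actual sizes of those directions using the geometric fact that the cycle closes up in $\mathbb{R}^2$. Throughout I use the standing reductions from the earlier remarks (all bounded edges lie in $\Gamma_c$, and there are no marked unbounded edges), so that at every cycle vertex the only flags are the two incident cycle edges together with unmarked unbounded edges.

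First I would fix coordinates on the cycle: write $\Gamma_c = V_1 V_2 \cdots V_k$ with $e_i$ joining $V_i$ to $V_{i+1}$ (indices mod $k$), oriented edge direction $(a_i, b_i)$, and length $l_i > 0$. Applying the balancing condition at $V_i$, where the flags are $e_{i-1}$, $e_i$, and the unbounded edges whose directions sum to the UB-sum $(x_i, y_i)$, gives $(a_i, b_i) - (a_{i-1}, b_{i-1}) + (x_i, y_i) = \vec{0}$, that is $a_i - a_{i-1} = -x_i$ and $b_i - b_{i-1} = -y_i$. So the $x$-coordinates of the cycle edges are the successive partial sums of the $x$-UB-sums, and symmetrically for $y$. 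I will carry out the argument for the $a_i$; the $b_i$ are handled identically after swapping the roles of $x$ and $y$.

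Two facts then combine. Telescoping the recursion gives, for any two cycle edges indexed $s$ and $t$, the identity $a_s - a_t = -\sum_{i \in \mathrm{arc}} x_i$, where the sum runs over the vertices on one of the two arcs of $\Gamma_c$ between them; this arc-sum is exactly the net $x$-contribution of the unbounded edges incident to those vertices, namely $P - N$, where $P$ and $N$ count the $(1,1)$ and $(-1,0)$ edges among them. Since the whole curve has only $d$ edges of each type, $0 \le P, N \le d$, whence $|a_s - a_t| = |P - N| \le d$. Separately, the cycle closes up in the plane: $\sum_i l_i (a_i, b_i) = \vec{0}$ with all $l_i > 0$, so $\sum_i l_i a_i = 0$, which forces $\min_i a_i < 0 < \max_i a_i$ unless every $a_i = 0$. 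Setting $M = \max_i a_i$ and $m = \min_i a_i$, the degenerate case gives $|a_i| = 0 < d$ (using $d \ge 1$), while otherwise $m < 0 < M$ together with $M - m \le d$ yields $M = (M - m) + m < d$ and $m = M - (M - m) > -d$, hence $-d < a_i < d$ for all $i$.

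The crux I expect to be the strict inequality. The telescoping/counting bound alone only gives $M - m \le d$, hence the non-strict $|a_i| \le d$, and the equality case (all $d$ copies of one unbounded direction concentrated on one arc with none of the opposing type) must be excluded. The closing condition is exactly what rules this out: a nonzero profile $(a_i)$ that sums to zero against positive lengths cannot stay weakly on one side of $0$, so it must be strictly straddle $0$, converting $\le d$ into $< d$. Making this interplay precise, and checking the degenerate boundary case $k = 1$ (the self-loop, where $\sum_i l_i a_i = l_1 a_1 = 0$ immediately forces $a_1 = 0$), is the only delicate part; the rest is bookkeeping with the balancing relation.
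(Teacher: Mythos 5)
Your proof is correct, and it rests on the same two ingredients as the paper's: telescoping the balancing condition around $\Gamma_c$ so that differences of edge directions are partial sums of UB-sums, and the closing relation $\sum_i l_i \vec{v_i} = \vec{0}$. Where you diverge is in how these are combined. The paper eliminates all but one direction vector and writes $\vec{v_1}$ explicitly as $\bigl((l_2+\cdots+l_k)\vec{x_2} + \cdots + l_k\vec{x_k}\bigr)/(l_1+\cdots+l_k)$, then bounds the numerator's coordinates by $(l_2+\cdots+l_k)d$ and gets strictness from the denominator exceeding $l_2+\cdots+l_k$; the bound on each edge then follows by cyclically relabeling which vertex is $V_1$. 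You never solve for any $a_i$: you bound the oscillation $\max_i a_i - \min_i a_i \le d$ by counting the $(1,1)$ and $(-1,0)$ edges on an arc, and then use the fact that a nonzero profile satisfying $\sum_i l_i a_i = 0$ with positive lengths must strictly straddle $0$. This buys you two small things: all $k$ edges are handled simultaneously rather than by symmetry, and the source of the strict inequality (the sign change forced by the closing condition) is made explicit, whereas in the paper it is somewhat hidden in the claim that the numerator is at most $(l_2+\cdots+l_k)d$ — a step that itself quietly uses the same counting fact you state openly, namely that the positive and negative parts of the $x$-coordinates of the UB-sums each total at most $d$. Both arguments are sound; yours is marginally more robust at the equality boundary and the paper's is more compact.
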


\begin{proof}
Assume that the cycle $\Gamma_c$ contains $k$ vertices $V_1, V_2, \ldots , V_k$, with the UB-sum of $V_i$ being $\vec{x_i}$. Let the direction of the bounded edge connecting vertex $V_i$ and $V_{i + 1}$ and pointing towards vertex $V_{i + 1}$ be $\vec{v_i} = (a_i, b_i)$ and let its length be $l_i$ for $1 \le i \le k-1$. In addition, let $\vec{v_k} = (a_k, b_k)$ be the direction vector and $l_k$ be the length of the bounded edge starting at $V_k$ and pointing towards $V_1$. By the balancing condition at each $V_i$ and because the bounded edges in $\Gamma_c$ form a closed cycle, we have equations
$$\vec{v_2} = \vec{v_1} - \vec{x_2}$$
$$\vec{v_3} = \vec{v_2} - \vec{x_3}$$
$$\cdots$$
$$\vec{v_k} = \vec{v_{k-1}} - \vec{x_k}$$
$$l_1\vec{v_1} + l_2\vec{v_2} + \cdots + l_k\vec{v_k} = \vec{0}.$$
By substitution, we eliminate all the $\vec{v_i}$ except $\vec{v_1}$ and plug into the last equation to get
$$\vec{v_1} = \frac{(l_2 + \cdots + l_k)\vec{x_2} + (l_3 + \cdots + l_k)\vec{x_3} + \cdots + l_k\vec{x_k}}{l_1 + l_2 + \cdots + l_k}.$$
Because the degree of $\Gamma$ is $d$, we see that the absolute value of the coordinates of $\vec{v_1}$ can be no more than $\frac{(l_2 + \cdots + l_k)d}{l_1 + \cdots + l_k}$. Since the $l_i$ are positive reals, we conclude that this value is less than $d$, as desired.
\end{proof}

\subsection{A special case: \texorpdfstring{$d=1$}{TEXT}}

\begin{theorem}
\label{theorem3.3}
The moduli space $\mathcal{M}_{=1, n}(1)$ is connected for any nonnegative integer $n$.
\end{theorem}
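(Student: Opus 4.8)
The plan is to combine the reductions established above with the strong rigidity that degree $1$ forces on the cycle, and then to observe that only a single combinatorial type survives. First I would invoke the earlier reductions: by the remark following Proposition~\ref{prop3.1} it suffices to treat curves all of whose bounded edges lie in $\Gamma_c$, and by Proposition~\ref{prop3.2} together with Proposition~\ref{prop3.3} it suffices to prove that $\mathcal{M}_{=1,0}(1)$ is connected, i.e.\ to handle curves with no marked unbounded edges in which every vertex of $\Gamma_c$ carries at least one unmarked unbounded edge. Concretely, I will show that every such curve is connected inside the moduli space to one fixed ``standard'' curve, namely the single vertex $V$ with a self-loop of direction $\vec{0}$ together with the three unbounded edges $(1,1),(-1,0),(0,-1)$ attached to $V$; connectedness of the whole space then follows since every point is joined to this common one.

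The key step is to pin down the possible combinatorial types. Applying Lemma~\ref{lemma3.2} with $d=1$, each bounded edge $e_i$ of $\Gamma_c$ has integer direction $(a_i,b_i)$ with $|a_i|,|b_i|<1$, forcing $a_i=b_i=0$. Thus every bounded edge of the cycle is contracted, all vertices of $\Gamma_c$ map to a single point of $\mathbb{R}^2$, and the balancing condition at each vertex $V_i$ degenerates to the statement that the UB-sum of $V_i$ is $\vec{0}$.

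Next I would run a short finite count. Because all bounded edges lie in $\Gamma_c$, each vertex meets exactly two bounded (cycle) edges, so the requirement that valence be at least $3$ forces at least one unbounded edge at each vertex; since degree $1$ supplies only the three unbounded edges $(1,1),(-1,0),(0,-1)$, the cycle has at most $k=3$ vertices. The vanishing-UB-sum condition then eliminates $k=2$ and $k=3$: no nonempty proper subset of $\{(1,1),(-1,0),(0,-1)\}$ sums to $\vec{0}$, so these three vectors cannot be split among two or three vertices with each block summing to zero. Hence $k=1$, and the only surviving type is exactly the self-loop curve described above.

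Finally, the moduli space of this single combinatorial type is parameterized by the length $l>0$ of the self-loop, i.e.\ it is an open ray, which is connected (indeed path-connected). Combining this with the reductions gives that $\mathcal{M}_{=1,n}(1)$ is connected. I expect the main obstacle to be essentially bookkeeping: verifying that the chain of reductions (Proposition~\ref{prop3.1}, Proposition~\ref{prop3.2}, Proposition~\ref{prop3.3}) genuinely reduces the problem to $\mathcal{M}_{=1,0}(1)$, and that a path to the standard curve in the reduced setting lifts back to a path in the original $\mathcal{M}_{=1,n}(1)$; the geometric heart, by contrast, is the one-line consequence of Lemma~\ref{lemma3.2} that degree $1$ contracts the entire cycle.
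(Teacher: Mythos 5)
Your proposal is correct, and its geometric heart coincides with the paper's: both proofs turn on Lemma~\ref{lemma3.2}, which for $d=1$ forces every bounded edge of $\Gamma_c$ to have direction $(0,0)$. Where you diverge is in the endgame. The paper finishes dynamically: having noted that all bounded edges have direction $\vec{0}$, it invokes Lemma~\ref{lemma3.1} to shrink all but one of them and lands on the trivial self-loop curve, so every point of the moduli space is joined by a path of degenerations to a single combinatorial type. You finish statically: after first pushing the curve through the reductions of Propositions~\ref{prop3.1}--\ref{prop3.3} into the setting with no marked edges and all bounded edges on the cycle, you observe that the valence condition forces an unmarked unbounded edge at each vertex, that balancing forces each UB-sum to vanish, and that no nonempty proper subset of $\{(1,1),(-1,0),(0,-1)\}$ sums to $\vec{0}$ --- so the reduced space consists of exactly one combinatorial type, an open ray, and there is nothing left to shrink. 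Your classification buys a sharper structural statement (the reduced stratum is literally a single ray) at the cost of leaning explicitly on the reduction propositions; note that it genuinely needs the no-marked-edges reduction, since a vertex carrying only marked unbounded edges (all of direction $\vec{0}$) would satisfy both the valence and balancing constraints and would allow cycles with more than one vertex --- precisely the configurations the paper's shrinking argument disposes of directly. Both routes are sound; yours is a touch more bookkeeping up front for a cleaner final picture.
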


\begin{proof}
By Lemma~\ref{lemma3.2}, we know that any bounded edge in a planar tropical curve of degree $1$ must have direction $(0, 0)$. By Lemma~\ref{lemma3.1}, we may apply a degeneration to shrink a bounded edge with direction $\vec{0}$ to a point. We shrink all but one of the bounded edges, as we need the genus to be $1$. The resulting planar tropical curve consists of one vertex with all unbounded edges attached to it as well as a bounded edge of direction $(0, 0)$ (the one that was not shrunken) and positive length. Because all degree $1$ planar tropical curves may be degenerated into this combinatorial type and because all of our degenerations were continuous maps, the moduli space $\mathcal{M}_{=1, n}(1)$ is connected.
\end{proof}

\section{Degeneration and Regeneration Methods}

We focus on three specific degeneration/regenerations that we will use to prove the connectedness of $\mathcal{M}_{=1, n}(d)$.

\subsection{Three Edge Degeneration}

\begin{exmp}
The following degeneration method involves changing the lengths of three consecutive bounded edges at once. To illustrate this special degeneration method, we shrink the horizontal edge  in the following curve:

\begin{figure}[ht]
\centering
\begin{tikzpicture}[scale=0.8]

\draw (0, -1) -- (-1, 0);

\draw (0, -1) -- (3, 0);

\draw (-1, 0) -- (0, 1);

\draw (3, 0) -- (2, 1);

\draw (0, 1) -- (2, 1);

\end{tikzpicture}
\hspace{0.5cm}
\begin{tikzpicture}[scale=0.8]

\draw (0, -1) -- (-1, 0);

\draw (0, -1) -- (3, 0);

\draw (-1, 0) -- (0, 1);

\draw (3, 0) -- (2, 1);

\draw (0, 1) -- (2, 1);

\draw[dashed] (0, 1) -- (1, 2);

\draw[dashed] (2, 1) -- (1, 2);

\draw[dashed] (0.33, 1.33) -- (1.67, 1.33);

\draw[dashed] (0.67, 1.67) -- (1.33, 1.67);

\draw[-latex] (1, 1.1) -- (1, 1.9);

\end{tikzpicture}
\hspace{0.5cm}
\begin{tikzpicture}[scale=0.8]

\draw (0, -1) -- (-1, 0);

\draw (0, -1) -- (3, 0);

\draw (-1, 0) -- (1, 2);

\draw (3, 0) -- (1, 2);

\end{tikzpicture}
\setlength{\belowcaptionskip}{-3pt}
\caption{Degeneration of horizontal edge.}
\end{figure}
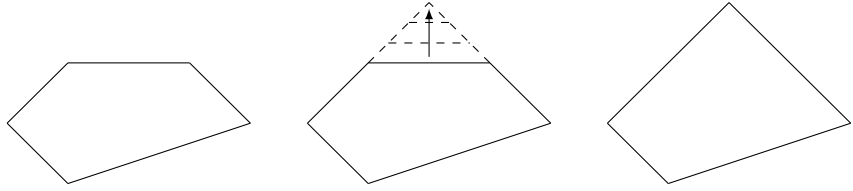
The goal is to use three-edge degeneration to shrink bounded edges in $\Gamma_c$ when $\Gamma_c$ contains more than three edges. Consider a set of any three consecutive edges, as shown below. We prove that we can use a degeneration to shrink at least one of these three consecutive bounded edges while keeping the two endpoints $1$ and $4$ (shown below) fixed. We keep endpoints fixed so that we do not change the direction vectors of the rest planar tropical curve, preserving the combinatorial type. 
\end{exmp}

\begin{figure}[ht]
\centering
\begin{tikzpicture}[scale=0.8]

\filldraw (-1, -1) circle (0.03);

\filldraw (3, -1) circle (0.03);

\draw (-1, -1) -- (0, 0);

\draw (0, 0) -- (2, 1);

\draw (2, 1) -- (3, -1);

\node at (-1.2, -1.2) {$1$};

\node at (-0.2, 0.2) {$2$};

\node at (2.1, 1.2) {$3$};

\node at (3.2, -1.2) {$4$};

\end{tikzpicture}
\setlength{\belowcaptionskip}{-3pt}
\caption{Three consecutive bounded edges.}
\end{figure}
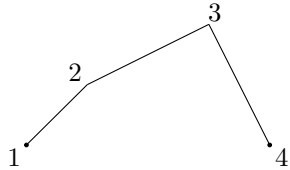

Label the vertices $1, 2, 3, 4$ from left to right and let $r_{ij}$ be the ray with endpoint at $i$ passing through $j$. let $r_1'$ be the ray with endpoint at $1$ pointing in the same direction as $r_{23}$ and let $r_4'$ be the ray with endpoint at $4$ pointing in the same direction as $r_{32}$.

\begin{center}
\begin{tikzpicture}[scale=0.8]

\node (1) at (-1, -1) [label = above:1] {};

\node (2) at (0, 0) [label = above:2] {};

\node (3) at (2, 1) [label = above:3] {};

\node (4) at (3, -1) [label = above:4] {};

\node (5) at (1, -2) {};

\node (6) at (1, 0) {};

\draw (-1, -1) -- (0, 0);

\draw (0, 0) -- (2, 1);

\draw (2, 1) -- (3, -1);

\draw[dashed, -latex] (1) -- ($(1)!5cm!(2)$);

\draw[dashed, -latex] (4) -- ($(4)!5cm!(3)$);

\draw[dashed, -latex] (2) -- ($(2)!5cm!(3)$);

\draw[dashed, -latex] (3) -- ($(3)!5cm!(2)$);

\draw[dashed, -latex] (4) -- ($(4)!5cm!(5)$);

\draw[dashed, -latex] (1) -- ($(1)!5cm!(6)$);

\node at (2.8, 2.8) {$r_{12}$};

\node at (0.4, 3.5) {$r_{43}$};

\node at (4.7, 2.3) {$r_{23}$};

\node at (-2.7, -1.4) {$r_{32}$};

\node at (3.7, 1.4) {$r_1'$};

\node at (-1.9, -3.3) {$r_4'$};

\end{tikzpicture}
\end{center}

\begin{lemma}
\label{lemma4.1}
If one of the three intersections of rays $r_{12} \cap r_{43}$, $r_{12} \cap r_4'$, or $r_1' \cap r_{43}$ is non empty, then we can shrink one of the three bounded edges.
\end{lemma}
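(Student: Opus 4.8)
\section*{Proof proposal}

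The plan is to match each of the three non-emptiness hypotheses to one of the three ways of shrinking an edge, and then to realize the corresponding degeneration as a straight-line homotopy of the edge lengths. First I would record the correspondence. Holding the directions of the three edges and the positions $p_1, p_4 \in \mathbb{R}^2$ of vertices $1$ and $4$ fixed, a configuration of the chain is determined by the lengths $l_{12}, l_{23}, l_{34}$ subject to the single closure constraint
\[
l_{12}\,\vec{v}_{12} + l_{23}\,\vec{v}_{23} + l_{34}\,\vec{v}_{34} = p_4 - p_1,
\]
where $\vec{v}_{12}, \vec{v}_{23}, \vec{v}_{34}$ are the fixed direction vectors of the three edges pointing from $1$ toward $4$. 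Setting $l_{23}=0$ forces vertices $2$ and $3$ to coincide at a single point lying on both $r_{12}$ and $r_{43}$, so an admissible configuration with edge $23$ shrunk exists exactly when $r_{12}\cap r_{43}\neq\emptyset$; likewise $l_{12}=0$ places the surviving vertex $3$ at $r_1'\cap r_{43}$, and $l_{34}=0$ places $2$ at $r_{12}\cap r_4'$. Thus each hypothesis names precisely the location of the merged (or remaining) vertex after the corresponding edge is collapsed, and by the evident relabeling symmetry it suffices to treat one case, say $r_{12}\cap r_{43}\neq\emptyset$, with intersection point $P$.

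In that case I would take the target curve $\Gamma'$ to be the chain with $2=3=P$: its lengths $l_{12}', l_{34}'$ are the unique scalars with $P = p_1 + l_{12}'\,\vec{v}_{12} = p_4 + l_{34}'\,\vec{v}_{43}$, both nonnegative because $P$ lies on the rays $r_{12}$ and $r_{43}$ (not merely on the underlying lines), and $l_{23}'=0$. The degeneration is then the linear interpolation of length vectors
\[
\bigl(l_{12}(t), l_{23}(t), l_{34}(t)\bigr) = t\,(l_{12}, l_{23}, l_{34}) + (1-t)\,(l_{12}', l_{34}', 0)\text{ reordered},
\]
running from $\Gamma$ at $t=1$ to $\Gamma'$ at $t=0$. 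Since the closure constraint is linear and both endpoints satisfy it, every intermediate length vector satisfies it too, so each $f(t)$ is a genuine planar tropical curve with the same three edge directions and with vertices $1,4$ fixed; in particular the combinatorial type is preserved while $t>0$. This is exactly the three-edge degeneration, driving $l_{23}$ to $0$.

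The one point requiring care, and the main obstacle, is that all three lengths must stay $\geq 0$ along the path, since negative lengths would reverse an edge and change the combinatorial type. Here each $l_{ij}(t)$ is an affine function of $t$ that is nonnegative at both endpoints $t=0,1$, and an affine function that is nonnegative at the endpoints of an interval is nonnegative on the whole interval (it equals a convex combination of its endpoint values). Hence no length is forced negative and no edge reverses direction. Finally I would restrict the homotopy to the first parameter $t^\ast$ at which some length reaches $0$: for $t>t^\ast$ all three lengths are strictly positive, so $f$ preserves type $\alpha$ as the definition of a degeneration demands, and at $t^\ast$ one of the three bounded edges has been shrunk (generically $l_{23}$ at $t^\ast=0$, or one of $l_{12}, l_{34}$ earlier, in which case that edge collapses instead). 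Reparametrizing $[t^\ast,1]$ to $[0,1]$ gives the desired degeneration, and the cases $r_{12}\cap r_4'\neq\emptyset$ and $r_1'\cap r_{43}\neq\emptyset$ follow identically after relabeling.
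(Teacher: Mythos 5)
Your proposal is correct and follows the same route as the paper: it matches each nonempty ray intersection to the edge that can be collapsed and deforms the three-edge chain with vertices $1$ and $4$ held fixed. The paper's own proof consists only of the case split plus illustrative figures; your linear interpolation of the length vector subject to the closure constraint $l_{12}\vec{v}_{12} + l_{23}\vec{v}_{23} + l_{34}\vec{v}_{34} = p_4 - p_1$, together with the observation that an affine function nonnegative at both endpoints stays nonnegative (indeed strictly positive for $t>0$, so the stopping-time $t^\ast$ is always $0$ and that last precaution is not needed), supplies the explicit degeneration the figures only depict.
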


\begin{proof}

We do casework on the three types of intersections of the rays.

If $r_{12} \cap r_{43} \ne \emptyset$, we may shrink $e_{23}$ to zero, where $e_{ij}$ is the bounded edge connecting vertices $i$ and $j$:

\begin{center}
\begin{tikzpicture}[scale=0.6]

\draw (-1, -1) -- (0, 0);

\draw[dashed] (0, 0) -- (1.67, 1.67);

\draw[dashed] (2, 1) -- (1.67, 1.67);

\draw[dashed] (0.56, 0.56) -- (1.89, 1.22);

\draw[dashed] (1.11, 1.11) -- (1.78, 1.44);

\draw[-latex] (1.8, 1) -- (1.6, 1.5);

\draw (0, 0) -- (2, 1);

\draw (2, 1) -- (3, -1);

\node at (2.1, 2) {$r_{12}$};

\node at (1.3, 2) {$r_{43}$};

\end{tikzpicture}
\end{center}

If $r_{12} \cap r_4' \ne \emptyset$, we may shrink $e_{34}$ to zero. Note: the two curves shown below are two different examples of the case where $r_{12} \cap r_4' \ne \emptyset$. Observe that in the second example $r_{12} \cap r_{43} \ne \emptyset$ as well, so we could have also used that type of degeneration. 

\begin{center}
\begin{tikzpicture}[scale=0.6]

\draw (0, 0) -- (1, 2);

\draw (1, 2) -- (3, 2);

\draw (3, 2) -- (1.5, 0.5);

\node (1) at (0, 0) {};

\node (2) at (1, 2) {};

\node (3) at (3, 2) {};

\node (4) at (1.5, 0.5) {};

\node (5) at (0, 0.5) {};

\draw[dashed] (1) -- ($(1)!3cm!(2)$);

\draw[dashed] (4) -- ($(4)!2.5cm!(5)$);

\draw[dashed] (0.75, 1.5) -- (2.5, 1.5);

\draw[dashed] (0.5, 1) -- (2, 1);

\draw[-latex] (1.2, 1.8) -- (1.2, 0.6);

\node at (1.5, 3) {$r_{12}$};

\node at (-1.1, 0.5) {$r_4'$};

\end{tikzpicture}
\hspace{0.5cm}
\begin{tikzpicture}[scale=0.6]

\draw (-1, -3) -- (0, 0);

\draw (0, 0) -- (2, 0);

\draw (2, 0) -- (3, -1);

\node (1) at (-1, -3) {};

\node (2) at (0, 0) {};

\node (3) at (2, 0) {};

\node (4) at (3, -1) {};

\node (5) at (0, -1) {};

\draw[dashed] (1) -- ($(1)!5cm!(2)$);

\draw[dashed] (4) -- ($(4)!5cm!(5)$);

\draw[dashed] (-0.22, -0.67) -- (2.67, -0.67);

\draw[dashed] (-0.11, -0.33) -- (2.33, -0.33);

\draw[-latex] (1, -0.1) -- (1, -0.9);

\node at (1, 2) {$r_{12}$};

\node at (-2.2, -1) {$r_4'$};

\end{tikzpicture}
\end{center}

If $r_{43} \cap r_1' \ne \emptyset$, we may shrink $e_{12}$ to zero. Note: again, we have two different examples to illustrate this case. In the first example, we see $r_{12} \cap r_{43} \ne \emptyset$, so we could have shrunk $e_{23}$ to zero, while in the second example, the only edge that may be degenerated is $e_{12}$. 

\begin{center}
\begin{tikzpicture}[scale=0.6]

\node (1) at (-1, -1) {};

\node (2) at (0, 0) {};

\node (3) at (2, 1) {};

\node (4) at (3, -1) {};

\node (5) at (1, -2) {};

\node (6) at (1, 0) {};

\draw (-1, -1) -- (0, 0);

\draw (0, 0) -- (2, 1);

\draw (2, 1) -- (3, -1);

\draw[dashed] (4) -- ($(4)!5cm!(3)$);

\draw[dashed] (1) -- ($(1)!5cm!(6)$);

\draw[dashed] (-0.33, -0.33) -- (2.067, 0.867);

\draw[dashed] (-0.67, -0.67) -- (2.133, 0.733);

\draw[-latex] (1, 0.45) -- (1.2, 0);

\node at (0.4, 3.5) {$r_{43}$};

\node at (3.5, 1.5) {$r_1'$};

\end{tikzpicture}
\hspace{0.5cm}
\begin{tikzpicture}[scale=0.6]

\node (1) at (0, 0) {};

\node (2) at (0, 1) {};

\node (3) at (2, 1) {};

\node (4) at (1, -2) {};

\node (5) at (1, 0) {};

\draw (0, 0) -- (0, 1);

\draw (0, 1) -- (2, 1);

\draw (2, 1) -- (1, -2);

\draw[dashed] (3) -- ($(4)!5cm!(3)$);

\draw[dashed] (1) -- ($(1)!4cm!(5)$);

\draw[dashed] (0, 0.33) -- (1.77, 0.33);

\draw[dashed] (0, 0.67) -- (1.88, 0.67);

\draw[-latex] (1, 0.8) -- (1, 0.1);

\node at (2.8, 3) {$r_{43}$};

\node at (4.2, 0) {$r_1'$};

\end{tikzpicture}
\end{center}
\end{proof}

\begin{defn}
In a planar tropical curve of degree $d > 1$, genus $1$, and at least four bounded edges in its cycle $\Gamma_c$, \textit{three-edge degeneration} is a degeneration that may be applied to three consecutive bounded edges in $\Gamma_c$ that shrinks the length of at least one of the three consecutive bounded edges to $0$.
\end{defn}

\begin{lemma}
\label{lemma4.2}

In a planar tropical curve with genus $1$ and $n > 3$ vertices in its cycle $\Gamma_c$ such that $\Gamma_c$ forms an (possibly self-intersecting) $n$-gon, we may apply a three-edge degeneration to any three consecutive bounded edges in the cycle, resulting in a polygon with fewer than $n$ sides.
\end{lemma}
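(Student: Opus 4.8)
The plan is to reduce the statement to Lemma~\ref{lemma4.1}: it suffices to show that for any three consecutive bounded edges, at least one of the three ray intersections $r_{12}\cap r_{43}$, $r_{12}\cap r_4'$, or $r_1'\cap r_{43}$ is nonempty, after which Lemma~\ref{lemma4.1} produces the desired shrinking and the cycle loses a side.

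First I would parameterize the three-edge degeneration intrinsically. Fix the two endpoints $V_1, V_4$ and the directions $\vec u, \vec w, \vec z$ of the three consecutive edges $e_{12}, e_{23}, e_{34}$ (these are exactly the data that must stay fixed so as not to disturb the rest of the combinatorial type). Writing $l_1, l_2, l_3$ for the three lengths, the requirement that the chain from $V_1$ to $V_4$ closes up reads
\[
l_1\vec u + l_2\vec w + l_3\vec z = V_4 - V_1,
\]
a fixed vector. This is a system of two linear equations (one per coordinate of $\mathbb{R}^2$) in the three unknowns $l_1, l_2, l_3$, so its solution set $L$ is an affine subspace of $\mathbb{R}^3$ of dimension at least $1$, and it contains the original configuration, whose coordinates are all strictly positive.

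The key step is the observation that a positive-dimensional affine subspace of $\mathbb{R}^3$ cannot be contained in the open positive octant $\{l_1, l_2, l_3 > 0\}$: any line inside it has a nonzero direction vector, and along that direction at least one coordinate eventually becomes nonpositive. Hence, starting at the interior configuration and moving inside $L$, we reach a first boundary point at which some $l_i = 0$ while the other two lengths remain $\ge 0$; the segment traversed is a legitimate degeneration (same combinatorial type for $t>0$, fewer edges at the endpoint) of the type permitted by Lemma~\ref{lemma3.1}. Translating the three outcomes back into the geometry: $l_2 = 0$ means vertices $2,3$ coincide, i.e. $r_{12}\cap r_{43}\ne\emptyset$; $l_1=0$ means vertices $1,2$ coincide, i.e. $r_1'\cap r_{43}\ne\emptyset$; and $l_3=0$ means vertices $3,4$ coincide, i.e. $r_{12}\cap r_4'\ne\emptyset$. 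In every case one of Lemma~\ref{lemma4.1}'s intersections is nonempty, so a three-edge degeneration applies and shrinks at least one edge, merging two adjacent cycle vertices and leaving a cycle with fewer than $n$ sides.

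The main obstacle I anticipate is the bookkeeping in the degenerate configurations where $\vec u, \vec w, \vec z$ fail to span (for instance when several of the edges are parallel), so that $L$ has dimension $2$ rather than $1$. I would check that the octant argument still forces a boundary crossing there, since a $2$-plane through an interior point likewise cannot avoid the coordinate hyperplanes, and that the resulting merge is still a valid tropical curve, with valence $\ge 3$ and the balancing condition intact at the combined vertex.
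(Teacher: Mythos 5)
Your proof is correct, but it takes a genuinely different route from the paper's. The paper fixes coordinates so that $e_{23}$ is horizontal, partitions the plane into five regions cut out by $r_{23}$, $r_{32}$, $r_1'$ and the two rays through vertices $1$ and $3$, and verifies region by region (plus seven boundary subcases for the fourth vertex) that one of the three intersections of Lemma~\ref{lemma4.1} is nonempty. Your argument replaces this entire case analysis with a dimension count: the closing condition $l_1\vec u + l_2\vec w + l_3\vec z = V_4 - V_1$ is two linear equations in three unknowns, so its solution set is an affine subspace of dimension at least one passing through the given interior point of the positive octant, and no positive-dimensional affine subspace can avoid the coordinate hyperplanes; following it to the first boundary point produces the degeneration directly, and your translation of the outcomes $l_i=0$ into the three ray intersections is accurate (though not strictly needed, since your segment already is a family of the kind Lemma~\ref{lemma3.1} accepts). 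What your approach buys is uniformity and generality: it absorbs the paper's preliminary case of an edge with direction $\vec 0$, the parallel and self-intersecting configurations, and all the boundary subcases without separate treatment, and it explains why three consecutive edges is the right number for a two-dimensional target. What the paper's explicit casework buys is knowing which edge shrinks in each configuration; but Theorem~\ref{theorem5.3} only uses the fact that at least one edge shrinks, and the one new possibility your argument admits --- all three lengths vanishing simultaneously when $V_1$ and $V_4$ map to the same point of $\mathbb{R}^2$ --- is harmless there. The two checks you flag at the end (the octant argument when the subspace has dimension two, and valence and balancing at the combined vertex) do go through, so the proof is complete.
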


\begin{proof}
If any one of the three bounded edges has direction $\vec{0}$, we may apply a degeneration to just that edge and shrink it to a point. Else, we claim that one of the three aforementioned intersections - $r_{12} \cap r_{43}, r_{12} \cap r_4', r_{43} \cap r_1'$ - always occurs. 

Without loss of generality, assume that $e_{23}$ is oriented horizontally. Furthermore, if we put our three consecutive edges in a coordinate plane with the x-axis along the same direction as $e_{23}$, we can assume that vertex $1$ has negative y-coordinate (if the y-coordinate was $0$, then the degeneration is trivial, as two edges point in the same direction). We now examine the possible locations of the fourth vertex and show that no matter where it is placed, one of the intersections of the rays is always nonempty. 

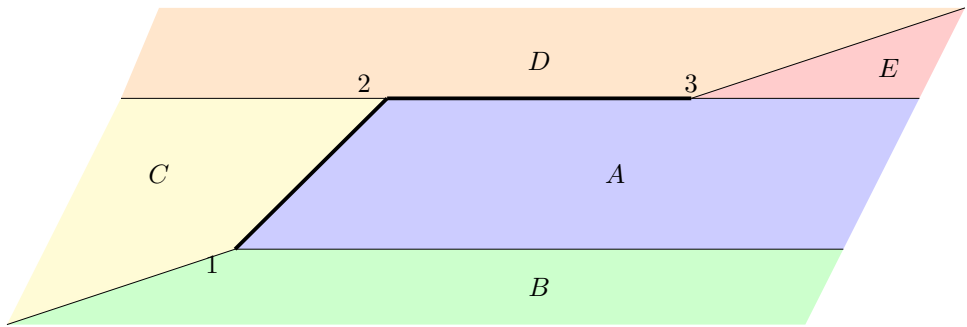
\begin{figure}[ht]
\centering
\begin{tikzpicture}

\fill[blue!20] (5, 0) -- (-2, 0) -- (-4, -2) -- (4, -2) -- cycle;

\fill[green!20] (4, -2) -- (-4, -2) -- (-7, -3) -- (3.5, -3) -- cycle;

\fill[yellow!20] (-5.5, 0) -- (-2, 0) -- (-4, -2) -- (-7, -3) -- cycle;

\fill[orange!20] (-5.5, 0) -- (-5, 1.2) -- (5.6, 1.2) -- (2, 0) -- cycle;

\fill[red!20] (5.6, 1.2) -- (2, 0) -- (5, 0) -- cycle;

\draw[line width=0.5mm] (-2, 0) -- (2, 0);

\draw[line width=0.5mm] (-4, -2) -- (-2, 0);

\draw[line width = 0.01mm] (-5.5, 0) -- (-2, 0);

\draw[line width = 0.01mm] (-4, -2) -- (-7, -3);

\draw[line width = 0.01mm] (2, 0) -- (5.6, 1.2);

\draw[line width = 0.01mm] (2, 0) -- (5, 0);

\draw[line width = 0.01mm] (-4, -2) -- (4, -2);

\node at (1, -1) {$A$};

\node at (0, -2.5) {$B$};

\node at (-5, -1) {$C$};

\node at (0, 0.5) {$D$};

\node at (4.6, 0.4) {$E$};

\node at (-4.3, -2.2) {$1$};

\node at (-2.3, 0.2) {$2$};

\node at (2, 0.2) {$3$};

\end{tikzpicture}
\setlength{\belowcaptionskip}{-3pt}
\caption{Casework on the location of the fourth vertex.}
\end{figure}

In the above picture, the bold lines are $e_{12}$ and $e_{23}$. We divide up the plane into regions with the following rays: $r_{23}$, $r_{32}$, $r_1'$, a ray at vertex $3$ with direction equal to the vector from vertex $1$ to vertex $3$, and a ray at vertex $1$ with direction equal to the vector from vertex $3$ to vertex $1$. The fourth vertex must lie in one of the five regions $A, B, C, D, E$. We claim that no matter where the fourth vertex is placed, we always get one of the three intersections $r_{12} \cap r_{43}, r_{12} \cap r_4',$ or $r_{43} \cap r_1'$. 

By inspection, if the vertex is placed in $A$, then ray $r_4'$ intersects $r_{12}$. If the vertex is placed in $B$, then ray $r_1'$ intersects $r_{43}$. If the vertex is placed in $C$, then the rays $r_{12}$ and $r_{43}$ intersect. If the vertex is placed in $D$, then the rays $r_1'$ and $r_{43}$ intersect. Finally, if the vertex is placed in $E$, then the rays $r_{12}$ and $r_{43}$ intersect. 

There are still several edge cases to be taken care of when the fourth vertex lies on any of the boundary lines. In these cases, we will still be able to shrink a bounded edge, and there is a possibility we may shrink more than one bounded edge. 

If the fourth vertex is placed on the bounded edge $e_{12}$ (as shown below), then we may shrink bounded edge $e_{23}$ and bounded edge $e_{34}$. 

\begin{center}
\begin{tikzpicture}

\draw (-4, -2) -- (-2, 0);

\draw (-2, 0) -- (2, 0);

\draw (2, 0) -- (-3, -1);

\draw[dashed] (-7/3, -1/3) -- (1/3, -1/3);

\draw[dashed] (-8/3, -2/3) -- (-4/3, -2/3);

\draw[-latex] (-0.8, -0.1) -- (-2.7, -0.8);

\node at (-4.2, -2.1) {$1$};

\node at (-2.2, 0.2) {$2$};

\node at (2.2, 0.2) {$3$};

\node at (-3.3, -0.9) {$4$};

\end{tikzpicture}
\end{center}

If the vertex is placed on the bounded edge $e_{23}$, then we may shrink bounded edge $e_{34}$.

\begin{center}
\begin{tikzpicture}

\draw (-4, -2) -- (-2, 0);

\draw (-2, 0) -- (2, 0);

\draw[-latex] (1.9, 0.2) -- (0.6, 0.2);

\draw[fill] (0.4, 0) circle (0.3mm);

\node at (-4.2, -2.1) {$1$};

\node at (-2.2, 0.2) {$2$};

\node at (2.2, 0.2) {$3$};

\node at (0.4, 0.2) {$4$};

\end{tikzpicture}
\end{center}

If the vertex lies on the boundary of regions $A$ and $E$, we may still shrink bounded edge $e_{34}$. 

\begin{center}
\begin{tikzpicture}

\draw (-4, -2) -- (-2, 0);

\draw (-2, 0) -- (2, 0);

\draw (2, 0) -- (3, 0);

\draw[-latex] (2.9, 0.2) -- (2.1, 0.2);

\draw[fill] (1.9, 0) circle (0.3mm);

\node at (-4.2, -2.1) {$1$};

\node at (-2.2, 0.2) {$2$};

\node at (1.9, 0.2) {$3$};

\node at (3.1, 0.2) {$4$};

\end{tikzpicture}
\end{center}

If the vertex lies on the boundary of the regions $A$ and $B$, we may shrink bounded edges $e_{12}$ and $e_{34}$. 

\begin{center}
\begin{tikzpicture}

\draw (-4, -2) -- (-2, 0);

\draw (-2, 0) -- (2, 0);

\draw (2, 0) -- (2.5, -2);

\draw[dashed] (-2.67, -0.67) -- (2.16, -0.67);

\draw[dashed] (-3.33, -1.33) -- (2.33, -1.33);

\draw[dashed] (-4, -2) -- (2.5, -2);

\draw[-latex] (0, -0.2) -- (0, -1.8);

\node at (-4.2, -2.1) {$1$};

\node at (-2.2, 0.2) {$2$};

\node at (2.2, 0.2) {$3$};

\node at (2.7, -2.1) {$4$};

\end{tikzpicture}
\end{center}

If the vertex lies on the boundary of the regions $B$ and $C$, we may shrink bounded edges $e_{12}$ and $e_{23}$. 

\begin{center}
\begin{tikzpicture}

\draw (-4, -2) -- (-2, 0);

\draw (-2, 0) -- (2, 0);

\draw (2, 0) -- (-5, -2.33);

\draw[dashed] (-2.67, -0.67) -- (0, -0.67);

\draw[dashed] (-3.33, -1.33) -- (-2, -1.33);

\draw[-latex] (-0.2, -0.2) -- (-3.2, -1.5);

\node at (-4.2, -1.8) {$1$};

\node at (-2.2, 0.2) {$2$};

\node at (2.2, 0.2) {$3$};

\node at (-5.2, -2.43) {$4$};

\end{tikzpicture}
\end{center}

If the vertex lies on the boundary of the regions $C$ and $D$, we may shrink bounded edge $e_{23}$. 

\begin{center}
\begin{tikzpicture}

\draw (-4, -2) -- (-2, 0);

\draw (-2, 0) -- (2, 0);

\draw (2, 0) -- (-3, 0);

\draw[-latex] (1.6, 0.2) -- (-1.6, 0.2);

\node at (-4.2, -2.1) {$1$};

\node at (-2.2, 0.2) {$2$};

\node at (2.2, 0.2) {$3$};

\node at (-3.1, 0.2) {$4$};

\end{tikzpicture}
\end{center}

If the vertex lies on the boundary between $D$ and $E$, we may shrink bounded edges $e_{12}$ and $e_{23}$.

\begin{center}
\begin{tikzpicture}

\draw (-4, -2) -- (-2, 0);

\draw (-2, 0) -- (2, 0);

\draw (2, 0) -- (4, 0.67);

\draw[dashed] (2, 0) -- (-4, -2);

\draw[dashed] (-2.67, -0.67) -- (0, -0.67);

\draw[dashed] (-3.33, -1.33) -- (-2, -1.33);

\draw[-latex] (-0.2, -0.2) -- (-3.2, -1.5);

\node at (-4.2, -2.1) {$1$};

\node at (-2.2, 0.2) {$2$};

\node at (2, 0.2) {$3$};

\node at (4.2, 0.77) {$4$};

\end{tikzpicture}
\end{center}

\end{proof}

\subsection{Quadrilateral-Triangle Degeneration}
\begin{exmp}
Given a genus $1$ planar tropical curve $\Gamma$ with $4$ vertices $A, B, C, D$ where the cycle $\Gamma_c$ forms a non-self-intersecting quadrilateral, by extending two opposite sides of the quadrilateral and shrinking another bounded edge, we may turn the quadrilateral into a triangle. This degeneration is called \textit{quadrilateral-triangle degeneration}. In the following example, given convex quadrilateral $ABCD$, we shrink the edge $AB$, combining and identifying vertices $A$ and $B$.

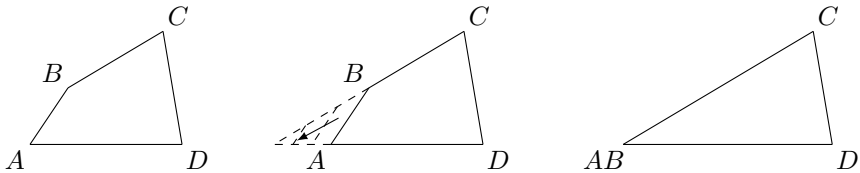
\begin{figure}[ht]
\centering
\begin{tikzpicture}

\draw (-1, 0) -- (1, 0);

\draw (1, 0) -- (3/4, 3/2);

\draw (3/4, 3/2) -- (-1/2, 3/4);

\draw (-1/2, 3/4) -- (-1, 0);

\node at (-1.2, -0.2) {$A$};

\node at (-0.7, 3/4 + 0.2) {$B$};

\node at (3/4 + 0.2, 1.7) {$C$};

\node at (1.2, -0.2) {$D$};

\end{tikzpicture} 
\hspace{0.5cm}
\begin{tikzpicture}

\draw (-1, 0) -- (1, 0);

\draw (1, 0) -- (3/4, 3/2);

\draw (3/4, 3/2) -- (-1/2, 3/4);

\draw (-1/2, 3/4) -- (-1, 0);

\draw[dashed] (-1, 0) -- (-1.75, 0);

\draw[dashed] (-0.5, 3/4) -- (-1.75, 0);

\draw[dashed] (-11/12, 0.5) -- (-5/4, 0);

\draw[dashed] (-4/3, 1/4) -- (-3/2, 0);

\draw[-latex] (-0.9, 0.35) -- (-1.45, 0.05);

\node at (-1.2, -0.2) {$A$};

\node at (-0.7, 3/4 + 0.2) {$B$};

\node at (3/4 + 0.2, 1.7) {$C$};

\node at (1.2, -0.2) {$D$};

\end{tikzpicture}
\hspace{0.5cm}
\begin{tikzpicture}

\draw (-7/4, 0) -- (1, 0);

\draw (1, 0) -- (3/4, 3/2);

\draw (3/4, 3/2) -- (-7/4, 0);

\node at (-2, -0.2) {$AB$};

\node at (3/4 + 0.2, 1.7) {$C$};

\node at (1.2, -0.2) {$D$};

\end{tikzpicture}
\setlength{\belowcaptionskip}{-3pt}
\caption{Degeneration of a convex quadrilateral.}
\end{figure}
\end{exmp}

\begin{exmp}
The same quadrilateral-triangle degeneration holds for a concave quadrilateral $ABCD$, as shown on the left. The degeneration shrinks the edge connecting vertices $C$ and $D$ and combines these two vertices. 

\begin{figure}[ht]
\centering
\begin{tikzpicture}[scale=0.8]

\draw (-2, 0) -- (0, 2);

\draw (0, 2) -- (0.5, 1);

\draw (0.5, 1) -- (2.5, 0);

\draw (2.5, 0) -- (-2, 0);

\node at (-2.2, -0.2) {$A$};

\node at (0, 2.2) {$B$};

\node at (0.7, 1.2) {$C$};

\node at (2.7, -0.2) {$D$};

\end{tikzpicture}
\hspace{0.5cm}
\begin{tikzpicture}[scale=0.8]

\draw (-2, 0) -- (0, 2);

\draw (0, 2) -- (0.5, 1);

\draw (0.5, 1) -- (2.5, 0);

\draw (2.5, 0) -- (-2, 0);

\draw[dashed] (0.5, 1) -- (1, 0);

\draw[dashed] (2/3, 2/3) -- (2, 0);

\draw[dashed] (5/6, 1/3) -- (3/2, 0);

\draw[-latex] (1.45, 0.45) -- (1.05, 0.05);

\node at (-2.2, -0.2) {$A$};

\node at (0, 2.2) {$B$};

\node at (0.7, 1.2) {$C$};

\node at (2.7, -0.2) {$D$};

\end{tikzpicture}
\hspace{0.5cm}
\begin{tikzpicture}[scale=0.8]

\draw (-2, 0) -- (0, 2);

\draw (0, 2) -- (1, 0);

\draw (1, 0) -- (-2, 0);

\node at (-2.2, -0.2) {$A$};

\node at (0, 2.2) {$B$};

\node at (1.2, -0.2) {$CD$};

\end{tikzpicture}
\setlength{\belowcaptionskip}{-3pt}
\caption{Degeneration of a concave quadrilateral.}
\end{figure}
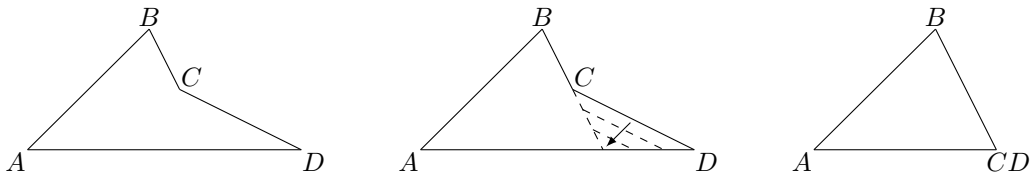

When applying this degeneration, we require that $ABCD$ be non self-intersecting so that we may combine adjacent vertices. In particular, two of the following pairs of vertices may always combine when $ABCD$ is non self-intersecting: $(A, B), (B, C), (C, D), (D, A)$. If we assume without loss of generality that $\angle A$ is the largest angle, then we know we can apply two different degenerations, one combining vertices $(A, B)$ and the other combining vertices $(A, D)$. However, if $ABCD$ is self intersecting, then we may not be able to degenerate the intersecting sides. For example, in the self-intersecting quadrilateral $ABCD$ shown below, we have no method of degenerating edge $BC$ or edge $AD$ without shrinking edge $AB$ or $CD$ first. However, this is not a concern, as we do not consider the degenerations of self-intersecting quadrilaterals in this paper.

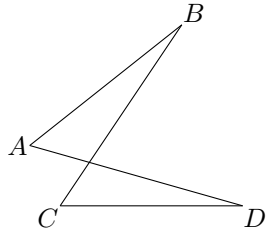
\begin{figure}[ht]
\centering
\begin{tikzpicture}[scale=0.8]

\draw (-0.5, 1) -- (2, 3);

\draw (2, 3) -- (0, 0);

\draw (0, 0) -- (3, 0);

\draw (3, 0) -- (-0.5, 1);

\node at (-0.7, 1) {$A$};

\node at (2.2, 3.2) {$B$};

\node at (-0.2, -0.2) {$C$};

\node at (3.2, -0.2) {$D$};

\end{tikzpicture}
\setlength{\belowcaptionskip}{-3pt}
\caption{Unable to degenerate a self-intersecting quadrilateral.}
\end{figure}
\end{exmp}

\subsection{Triangle-Quadrilateral Regeneration}

\begin{exmp} 
Consider the following planar tropical curve $\Gamma$ with three vertices such that $\Gamma_c$ forms a triangle. Suppose one of the vertices has two unmarked unbounded edges. We apply a regeneration to split this vertex into two new vertices, distributing the two unmarked unbounded edges between the two new vertices and turning the shape of the new curve $\Gamma'$ into a quadrilateral. Note that the choice of the distribution of the unmarked unbounded edges influences the direction of the regenerated bounded edge. This degeneration is called \textit{triangle-quadrilateral regeneration}.

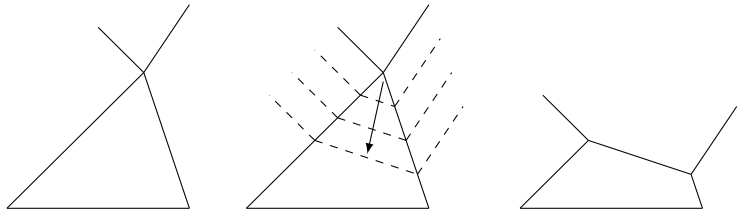
\begin{figure}[ht]
\centering
\begin{tikzpicture}[scale=0.6]

\draw (-2, 0) -- (2, 0);

\draw (-2, 0) -- (1, 3);

\draw (1, 3) -- (2, 0);

\draw (1, 3) -- (0, 4);

\draw (1, 3) -- (2, 4.5);

\end{tikzpicture}
\hspace{0.5cm}
\begin{tikzpicture}[scale=0.6]

\draw (-2, 0) -- (2, 0);

\draw (-2, 0) -- (1, 3);

\draw (1, 3) -- (2, 0);

\draw (1, 3) -- (0, 4);

\draw (1, 3) -- (2, 4.5);

\draw[dashed] (0.5, 2.5) -- (1.25, 2.25);

\draw[dashed] (0.5, 2.5) -- (-0.5, 3.5);

\draw[dashed] (1.25, 2.25) -- (2.25, 3.75);

\draw[dashed] (0, 2) -- (1.5, 1.5);

\draw[dashed] (0, 2) -- (-1, 3);

\draw[dashed] (1.5, 1.5) -- (2.5, 3);

\draw[dashed] (-0.5, 1.5) -- (1.75, 0.75);

\draw[dashed] (-0.5, 1.5) -- (-1.5, 2.5);

\draw[dashed] (1.75, 0.75) -- (2.75, 2.25);

\draw[-latex] (1, 2.8) -- (0.65, 1.2);

\end{tikzpicture}
\hspace{0.5cm}
\begin{tikzpicture}[scale=0.6]

\draw (-2, 0) -- (2, 0);

\draw (-2, 0) -- (-0.5, 1.5);

\draw (-0.5, 1.5) -- (1.75, 0.75);

\draw (1.75, 0.75) -- (2, 0);

\draw (-0.5, 1.5) -- (-1.5, 2.5);

\draw (1.75, 0.75) -- (2.75, 2.25);

\end{tikzpicture}
\setlength{\belowcaptionskip}{-3pt}
\caption{Regeneration of a triangle.}
\end{figure}
\end{exmp}

Not only do we wish to regenerate an edge and turn the shape of the graph from a triangle to a quadrilateral, we need to make sure this newly formed quadrilateral is not self-intersecting. Suppose that we have a planar tropical curve of genus $1$ with three vertices, and say we wish to split vertex $V$ into two. Let the two bounded edges that meet at $V$ have directions $\vec{a}, \vec{b}$, where the vectors point towards $V$. 

\begin{figure}[ht]
\centering
\begin{tikzpicture}[scale=0.8]

\draw [-latex] (-5, -1) -- node[midway, below] {$\vec{a}$} (0, 0);

\draw (0, 0) -- (5, 1);

\draw [-latex] (4, -2) -- node[midway, below] {$\vec{b}$} (0, 0);

\draw (4, -2) -- (-5, -1);

\draw (0, 0) -- (-4, 2);

\draw[-latex] (0, 0) -- node[at end, above] {$\vec{v}$} (-0.5, 2);

\node at (0.4, 1) {$A$};

\node at (3, -0.3) {$B$};

\node at (-0.2, -1) {$C$};

\node at (-3, 0.3) {$D$};

\end{tikzpicture}
\setlength{\belowcaptionskip}{-3pt}
\caption{Four regions formed by $\vec{a}$ and $\vec{b}$.}
\end{figure}
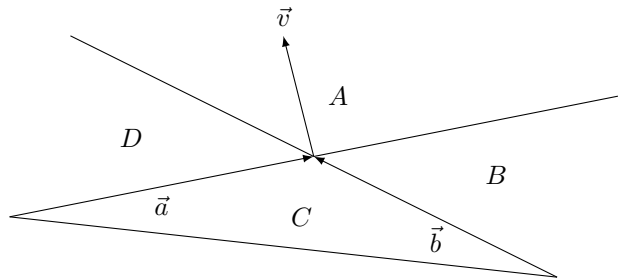

Suppose the direction of the new bounded edge we want to regenerate is $\vec{v}$ where $\vec{v}$ points towards the head of $\vec{b}$. If the newly split vertex at the head of the bounded edge with direction $\vec{a}$ has UB-sum $(c, d)$, then by the balancing condition at this  vertex, we see that $\vec{v}$ must have direction $\vec{a} - (c, d)$ (In most applications of this lemma, we make $(c, d)$ equal to $(1, 0)$, $(0, 1)$, or $(1, 1)$). Draw $\vec{v}$ with its tail at $V$. It is easy to see that if $\vec{v}$ lies in one of the regions $A, B, C$ then the quadrilateral we regenerate will NOT be self-intersecting, as we illustrate: \\

$\vec{v} \in A$: 

\begin{center}
\begin{tikzpicture}[scale=0.8]

\draw [-latex] (-5, -1) -- node[midway, above] {$\vec{a}$} (-2.5, -0.5); 

\draw [-latex] (-2.5, -0.5) -- node[midway, above] {$\vec{v}$} (-1, 0.5);

\draw [latex-] (-1, 0.5) -- node[midway, above] {$\vec{b}$} (4, -2);

\draw[dashed] (-2.5, -0.5) -- (0, 0);

\draw[dashed] (-5/3, -1/3) -- (-2/3, 1/3);

\draw[dashed] (-5/6, -1/6) -- (-1/3, 1/6);

\draw[-latex] (-0.3, 0) -- (-1.2, 0.2);

\draw (-5, -1) -- (4, -2);

\end{tikzpicture}
\end{center}

$\vec{v} \in B$: 

\begin{center}
\begin{tikzpicture}[scale=0.8]

\draw [-latex] (-5, -1) -- node[midway, above] {$\vec{a}$} (-2.5, -0.5); 

\draw [-latex] (-2.5, -0.5) -- node[midway, below] {$\vec{v}$} (2, -1);

\draw [latex-] (2, -1) -- node[midway, above] {$\vec{b}$} (4, -2);

\draw[dashed] (-2.5, -0.5) -- (0, 0);

\draw[dashed] (4, -2) -- (0, 0);

\draw[dashed] (-5/3, -1/3) -- (4/3, -2/3);

\draw[dashed] (-5/6, -1/6) -- (2/3, -1/3);

\draw[-latex] (0, -0.1) -- (0.1, -0.7);

\draw (-5, -1) -- (4, -2);

\end{tikzpicture}
\end{center}

$\vec{v} \in C$: 

\begin{center}
\begin{tikzpicture}[scale=0.8]

\draw [-latex] (-5, -1) -- node[midway, above] {$\vec{a}$} (1, 0.2); 

\draw [-latex] (1, 0.2) -- node[midway, above] {$\vec{v}$} (2, -1);

\draw [latex-] (2, -1) -- node[midway, above] {$\vec{b}$} (4, -2);

\draw[dashed] (2, -1) -- (0, 0);

\draw[dashed] (1/3, 1/15) -- (2/3, -1/3);

\draw[dashed] (2/3, 2/15) -- (4/3, -2/3);

\draw[-latex] (0.2, -0.05) -- (1.2, -0.2);

\draw (-5, -1) -- (4, -2);

\end{tikzpicture}
\end{center}

$\vec{v} \in D$: 

\begin{center}
\begin{tikzpicture}[scale=0.8]

\draw [-latex] (-5, -1) -- node[midway, above] {$\vec{a}$} (2.5, 0.5); 

\draw [-latex] (2.5, 0.5) -- node[midway, above] {$\vec{v}$} (-2, 1);

\draw [latex-] (-2, 1) -- node[midway, above] {$\vec{b}$} (4, -2);

\draw[dashed] (5/3, 1/3) -- (-4/3, 2/3);

\draw[dashed] (5/6, 1/6) -- (-2/3, 1/3);

\draw[-latex] (0.1, 0.1) -- (0.2, 0.7);

\draw (-5, -1) -- (4, -2);

\end{tikzpicture}
\end{center}

As we can see, when $\vec{v}$ lies in region $D$, on the boundary of regions $A, D$, or on the boundary of regions $C, D$, the quadrilateral we regenerate is self-intersecting. Similarly, if $\vec{v}$ starts at the head of $\vec{b}$ and points towards $\vec{a}$, then if $\vec{v}$ lies in region $B$, on the boundary of regions $A, B$, or on the boundary of regions $B, C$, the quadrilateral formed will be self-intersecting. 

\begin{defn}
If two vectors $\vec{a}$ and $\vec{b}$ have heads that meet at a point $V$, we define the \textit{bad region} of $\vec{a}$ with respect to $\vec{b}$ to be the region (along with its boundary) bounded by $\vec{a}$ and the extension of $\vec{b}$ past $V$, denoted by $\text{BR}(\vec{a}, \vec{b})$. Similarly, we define the bad region of $\vec{b}$ with respect to $\vec{a}$ to be the region bounded by $\vec{b}$ and the extension of $\vec{a}$ past $V$, denoted by $\text{BR}(\vec{b}, \vec{a})$. 
\end{defn}

\begin{rem}
If we refer to Figure 15, we see that $\text{BR}(\vec{a}, \vec{b})$ is region $D$ while $\text{BR}(\vec{b}, \vec{a})$ is region $B$. Generally, $\text{BR}(\vec{a}, \vec{b})$ is not the same as $\text{BR}(\vec{b}, \vec{a})$. Additionally, notice that $\text{BR}(\vec{a}, \vec{b})$ and the vector $\vec{b}$ are on the opposite sides of the line formed by $\vec{a}$. By this definition, if we want to regenerate a new bounded edge with direction $\vec{v}$ such that the tail of $\vec{v}$ is at the head of $\vec{a}$ and the head of $\vec{v}$ is at the head of $\vec{b}$, then this new quadrilateral is non-self-intersecting if and only if $\vec{v}$ does not lie in $\text{BR}(\vec{a}, \vec{b})$. 
\end{rem}

\begin{lemma}
\label{lemma4.3}
Given a planar tropical curve $\Gamma$ with genus $1$, degree $d \ge 2$, and three vertices, consider a vertex with at least two unbounded edges connected to it with nonzero UB-sum $(x, y)$. Then, we may always perform a triangle-quadrilateral regeneration to split the vertex and its unbounded edges to create two new vertices with UB-sum $(c, d)$ and $(e, f)$ with $(c + e, d + f) = (x, y)$ and obtain a non self-intersecting quadrilateral.
\end{lemma}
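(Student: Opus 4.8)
The plan is to reduce the non-self-intersection condition to a single sign inequality and then exploit the freedom in how the unbounded edges at the chosen vertex are distributed. Let $\vec{a}$ and $\vec{b}$ denote the directions of the two bounded edges meeting at the vertex $V$ we wish to split, both oriented so that their heads lie at $V$; by the balancing condition at $V$ the UB-sum satisfies $(x,y) = \vec{a} + \vec{b}$. After splitting, I place the new vertex $V_1$ at the head of $\vec{a}$ with UB-sum $\vec{c}$ (playing the role of $(c,d)$ in the statement) and the new vertex $V_2$ at the head of $\vec{b}$ with UB-sum $(x,y) - \vec{c}$ (the role of $(e,f)$); balancing at $V_1$ forces the regenerated edge, pointing from $V_1$ to $V_2$, to have direction $\vec{v} = \vec{a} - \vec{c}$, exactly as in the discussion preceding the lemma. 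By the remark following the definition of the bad region, the resulting quadrilateral is non-self-intersecting precisely when $\vec{v} \notin \mathrm{BR}(\vec{a}, \vec{b})$.

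The key preliminary step is to rewrite $\mathrm{BR}(\vec{a}, \vec{b})$ as a pair of cross-product inequalities. Since $\Gamma_c$ is a genuine triangle, the two edges at $V$ are not parallel, so $\vec{a} \times \vec{b} \ne 0$, where I write $\vec{u} \times \vec{w} = u_1 w_2 - u_2 w_1$; after possibly interchanging the labels $\vec{a}, \vec{b}$ (which merely swaps $V_1$ and $V_2$ and leaves the non-self-intersection condition unchanged) I may assume $\vec{a} \times \vec{b} > 0$. Reading off Figure 15, the bad region is then $\mathrm{BR}(\vec{a}, \vec{b}) = \{\vec{w} : \vec{a} \times \vec{w} \ge 0 \text{ and } \vec{b} \times \vec{w} \ge 0\}$. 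Substituting $\vec{v} = \vec{a} - \vec{c}$ and using $\vec{a} \times \vec{a} = 0$, the configuration is non-self-intersecting if and only if $\vec{a} \times \vec{c} > 0$ or $\vec{b} \times \vec{c} > -\,\vec{a} \times \vec{b}$.

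Now I use that $V$ carries at least two unbounded edges. Pick any one of them, with direction $u \in \{(1,1),(-1,0),(0,-1)\}$, and split off only this edge. If $\vec{a} \times u > 0$, assign $u$ to $V_1$, i.e. take $\vec{c} = u$; the first inequality holds, so the quadrilateral is non-self-intersecting. If instead $\vec{a} \times u \le 0$, assign $u$ to $V_2$ and the remaining edges to $V_1$, i.e. take $\vec{c} = (x,y) - u$; then $\vec{a} \times \vec{c} = \vec{a} \times \vec{b} - \vec{a} \times u > 0$ since $\vec{a} \times u \le 0 < \vec{a} \times \vec{b}$, so again the first inequality holds. In either case both $V_1$ and $V_2$ receive at least one unbounded edge, hence have valence at least three, and the selected option never produces $\vec{v} = \vec{0}$: the condition $\vec{a} \times u > 0$ forbids $u = \vec{a}$, while $\vec{a} \times u \le 0$ forbids $u = \vec{b}$. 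This yields a genuine non-self-intersecting quadrilateral whose two new vertices have the required UB-sums $\vec{c}$ and $(x,y) - \vec{c}$.

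I expect the only real obstacle to be the second of these steps: correctly translating the pictorial bad region into the cross-product inequalities, together with the bookkeeping of orientation (the sign of $\vec{a} \times \vec{b}$) and the degenerate possibility $\vec{v} = \vec{0}$. Once the criterion is in algebraic form, the dichotomy on the sign of $\vec{a} \times u$ makes the existence of a valid split essentially immediate, so no delicate case analysis on the number or arrangement of the unbounded edges is required.
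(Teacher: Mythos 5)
Your algebraic reformulation is sound: with $\vec{a},\vec{b}$ the directions of the two bounded edges pointing into $V$ and $\vec{a}\times\vec{b}>0$, the closed bad region is exactly $\{\vec{w}:\vec{a}\times\vec{w}\ge 0,\ \vec{b}\times\vec{w}\ge 0\}$, and substituting $\vec{v}=\vec{a}-\vec{c}$ gives precisely your criterion. This is a cleaner, coordinate-free version of the paper's clockwise/counterclockwise case analysis, and the dichotomy on the sign of $\vec{a}\times u$ is a legitimate way to conclude. The implicit assumptions you lean on ($\vec{a}\times\vec{b}\ne 0$ for a genuine triangle, and the remark that non-self-intersection is equivalent to $\vec{v}\notin\text{BR}(\vec{a},\vec{b})$) are the same ones the paper relies on, so I do not count them against you.

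The genuine gap is one of generality. The paper's proof fixes an \emph{arbitrary prescribed} UB-sum $(c,d)$ for one of the two groups of unbounded edges and shows that at least one of the two side-assignments (attaching that group at the head of $\vec{a}$ or at the head of $\vec{b}$) yields a non-self-intersecting quadrilateral; this universality is what Lemma~\ref{lemma5.2} consumes, e.g.\ in its Case 1 the split must put exactly the two edges $(1,1)$ and $(0,-1)$ (UB-sum $(1,0)$) on one new vertex. Your proof only treats partitions of the form ``one chosen unbounded edge $u$ versus the rest,'' and moreover lets the non-self-intersection test dictate which side $u$ lands on, so as written it does not deliver the two-edge split needed there. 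Fortunately the repair inside your own framework is one line: for any prescribed partition with UB-sums $\vec{c}$ and $\vec{e}=(x,y)-\vec{c}$, you have $\vec{a}\times\vec{c}+\vec{a}\times\vec{e}=\vec{a}\times(\vec{a}+\vec{b})=\vec{a}\times\vec{b}>0$, so at least one of the two groups has strictly positive cross product with $\vec{a}$ and can be attached at the head of $\vec{a}$, making your first inequality hold. With that replacement your argument proves the lemma in the strength the paper uses, and is arguably tidier than the original.
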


\begin{proof}
Suppose we wanted to split vertex $V$ so that one of the new vertices $V_1$ has UB-sum $(c, d)$, and $V_1$ is connected to the head of $\vec{a}$. Suppose the tail of the new bounded edge with direction vector $\vec{v}$ begins on $\vec{a}$ and points towards $\vec{b}$. Then we must have $\vec{a} - \vec{v} = (c, d)$, so $\vec{v} = \vec{a} - (c, d)$. Similarly, if the tail of $\vec{v}$ begins on $\vec{b}$ and points to $\vec{a}$, and the other new vertex $V_2$ has UB-sum $(c, d)$ instead, then we know $\vec{v} = \vec{b} - (c, d)$. This gives us two regeneration options. To prove the lemma we show that it is never the case that both regeneration options produce self-intersecting quadrilaterals. For now, assume that the tail of $\vec{v}$ starts on $\vec{a}$ and points to $\vec{b}$ so that $\vec{v} = \vec{a} - (c, d)$. 

For brevity, let ``ccw'' mean counterclockwise and let ``cw'' mean clockwise. If $\vec{a}$ lies to the left of the line determined by vector $(c, d)$ in the coordinate plane, then $\vec{a} - (c, d)$ lies ccw with respect to $\vec{a}$. If $\vec{a}$ lies to the right of the line determined by vector $(c, d)$ in the coordinate plane, then $\vec{a} - (c, d)$ lies cw with respect to $\vec{a}$:

\begin{center}
\begin{tikzpicture}[scale=0.8]

\draw (-3, 0) -- (3, 0);

\draw (0, -3) -- (0, 3);

\draw[-latex] (-2.5, -1.25) -- (2.5, 1.25) node[at end, right] {$(c, d)$};

\draw[-latex] (0, 0) -- (2.5, 3) node[at end, right] {$\vec{a}$};

\draw[-latex] (0, 0) -- (0.5, 2) node[at end, above] {$\vec{a} - (c, d)$};

\draw[-latex] (0, 0) -- (1, -1.5) node[at end, right] {$\vec{a}$};

\draw[-latex] (0, 0) -- (-1, -2.5) node[at end, below] {$\vec{a} - (c, d)$};

\end{tikzpicture}
\end{center}

We let $\vec{a'}$ be a copy of $\vec{a}$ with its tail at the origin, for visualization purposes. Define $\vec{b'}$ in a similar way. Now assume without loss of generality that $\vec{a'}$ lies to the left of the line determined by vector $(c, d)$ the coordinate plane, so that $\vec{a'} - (c, d)$ lies in this region as well. As before, let the heads of $\vec{a}$ and $\vec{b}$ meet at a vertex $V$, which we take to be the origin. The bad region $\text{BR}(\vec{a}, \vec{b})$ is shown in red.

\begin{center}
\begin{tikzpicture}[scale=0.8]

\fill[red!20] (1.5, -3) -- (-3, -3) -- (-3, 3) -- (0, 0) -- cycle;

\draw (-3, 0) -- (3, 0);

\draw (0, -3) -- (0, 3);

\draw[-latex] (-2.5, -1.25) -- (2.5, 1.25) node[at end, right] {$(c, d)$};

\draw [-latex] (1, -2) -- node[midway, left] {$\vec{a}$} (0, 0);

\draw [-latex] (0, 0) -- node[at end, above] {$\vec{b'}$} (-2.5, 2.5);

\draw [latex-] (0, 0) -- node[midway, right] {$\vec{b}$} (2.5, -2.5);

\draw [-latex] (0, 0) -- node[at end, right] {$\vec{a'}$} (-1, 2);

\end{tikzpicture}
\end{center}

If $\vec{a} - (c, d)$ does not lie in $\text{BR}(\vec{a}, \vec{b})$, then by definition we can regenerate our triangle into a non-self-intersecting quadrilateral. In the other case, when $\vec{a} - (c, d)$ lies in $\text{BR}(\vec{a}, \vec{b})$, this implies (by the definition of bad region) that $\vec{b'}$ must lie in the  region bounded by $\vec{a'}$ and $\vec{a} - (c, d)$. Thus, $\vec{b'}$ must lie in the orange region and to the left of the line defined by vector $(c, d)$:

\begin{center}
\begin{tikzpicture}[scale=0.8]

\fill[orange!20] (-1.5, 3) -- (-3, 3) -- (-3, 1) -- (0, 0) -- cycle;

\draw (-3, 0) -- (3, 0);

\draw (0, -3) -- (0, 3);

\draw[-latex] (-2.5, -1.25) -- (2.5, 1.25) node[at end, right] {$(c, d)$};

\draw[-latex] (0, 0) -- (-1, 2) node[at end, above] {$\vec{a'}$};

\draw[-latex] (0, 0) -- (-3, 1) node[at end, above] {$\vec{a'} - (c, d)$};

\draw[latex-] (0, 0) -- (1, -2) node [midway, right] {$\vec{a}$};

\end{tikzpicture}
\end{center}

Assume that $\vec{b'}$ does indeed lie in this region. Now, since $\vec{b'}$ lies to the left of the line defined by the vector $(c, d)$, then $\vec{b'} - (c, d)$ lies ccw to $\vec{b'}$. However, since $\vec{b'}$ is ccw to $\vec{a'}$ we know both $\vec{a}$ and $\vec{b'} - (c, d)$ lie on the left side of $\vec{b}$ and its extension, so $\vec{b'} - (c, d)$ cannot lie in $\text{BR}(\vec{b}, \vec{a})$ (which is shown in red in the next image) by the second claim of Remark 4.1. This means the other regeneration method can still apply, the regeneration with the tail of $\vec{v}$ at the head of $\vec{b}$. A similar argument applies when $\vec{a}$ lies to the right of the line determined by vector $(c, d)$, simply replace every instance of counterclockwise with clockwise.

\begin{center}
\begin{tikzpicture}[scale=0.8]

\fill[red!20] (3, -2) -- (3, 3) -- (-1.5, 3) -- (0, 0) -- cycle;

\draw (-3, 0) -- (3, 0);

\draw (0, -3) -- (0, 3);

\draw[-latex] (-2.5, -1.25) -- (2.5, 1.25) node[at end, right] {$(c, d)$};

\draw[-latex] (0, 0) -- (-1, 2) node[at end, above] {$\vec{a'}$};

\draw[-latex] (0, 0) -- (-3, 1) node[at end, above] {$\vec{a'} - (c, d)$};

\draw[latex-] (0, 0) -- (1, -2) node [midway, left] {$\vec{a}$};

\draw[-latex] (0, 0) -- (-1.2, 0.8) node[at end, left] {$\vec{b'}$};

\draw[-latex] (1.2, -0.8) -- (0, 0) node[midway, right] {$\vec{b}$};

\draw[-latex] (0, 0) -- (-3.2, -0.2) node[at end, left] {$\vec{b'} - (c, d)$};

\end{tikzpicture}
\end{center}

In the above example, we can apply a regeneration that regenerates a new edge with direction vector $\vec{b} - (c, d)$ pointing from the head of $\vec{b}$ to the head of $\vec{a}$, changing the planar tropical curve from a triangle to a non self-intersecting quadrilateral:

\begin{center}
\begin{tikzpicture}[scale=0.8]

\draw(0, 0) -- (3, -2) node [midway, above] {$\vec{b}$};

\draw (0, 0) -- (2, -4) node [midway, left] {$\vec{a}$};

\draw (3, -2) -- (2, -4);

\draw[dashed] (1/2, -1/3) -- (35/198, -35/99);

\draw[dashed] (1, -2/3) -- (35/99, -70/99);

\draw[dashed] (3/2, -1) -- (35/66, -35/33);

\draw[-latex] (0.2, -0.2) -- (0.9, -0.9);

\end{tikzpicture}
\end{center}

\end{proof}

\section{Connectedness of the Moduli Space of Planar Tropical Curves of Genus 1}

We prove two lemmas and use the regenerations and degenerations established in Section 4 to prove that the moduli space of planar tropical curves $\mathcal{M}_{=1, n}(d)$ is connected.

\begin{lemma} 
\label{lemma5.1}
Given a multiset of direction vectors $S = \{ \vec{v_1}, \vec{v_2}, \ldots , \vec{v_k}\}$ with $k \ge 2$, all of which have direction $(1, 1), (-1, 0),$ or $(0, -1)$ with sum of direction vectors in the set equal to $(m, n)$, then for any vector $(mk, nk)$ with $0 < k < 1$ and $mk, nk$ both integers, we can always pick a subset of $S$ such that the sum of the direction vectors in the subset is equal to $(mk, nk)$. 
\end{lemma}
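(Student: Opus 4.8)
The plan is to translate the statement into a counting problem about the three multiplicities. Write $p$, $q$, $r$ for the number of copies of $(1,1)$, $(-1,0)$, $(0,-1)$ occurring in $S$; then the hypothesis that the total sum is $(m,n)$ says precisely $(m,n) = (p-q,\,p-r)$. To avoid a clash with the size of $S$, I will call the scalar $t$, so that $0 < t < 1$ and the target vector is $(M,N) := (tm,\,tn)$ with $M,N \in \mathbb{Z}$. A sub-multiset of $S$ is exactly a triple of integers $(a,b,c)$ with $0 \le a \le p$, $0 \le b \le q$, $0 \le c \le r$, and its vector sum is $(a-b,\,a-c)$. So the lemma is asking for such a triple with $a - b = M$ and $a - c = N$.

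First I would solve the two equations for $b$ and $c$, namely $b = a - M$ and $c = a - N$, so that the whole problem collapses to choosing a single integer $a$. The bound constraints on $a$, $b$, $c$ then merge into the single requirement
$$\max(0,\,M,\,N) \;\le\; a \;\le\; \min(p,\,M+q,\,N+r),$$
and since every endpoint here is an integer, it suffices to show that this interval is nonempty.

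The heart of the argument is verifying $\max(0,M,N) \le \min(p,\,M+q,\,N+r)$, i.e. the nine inequalities obtained by comparing each of $0,M,N$ with each of $p,\,M+q,\,N+r$. Here I would substitute $m = p-q$ and $n = p-r$ and use $0 < t < 1$ together with $p,q,r \ge 0$. Each inequality reduces to a statement of the shape ``a nonnegative quantity scaled by a factor $t<1$ stays below its unscaled value'': for instance $M \le p$ becomes $(p-q)t \le p$, which holds since $(p-q)t \le p-q \le p$ when $p \ge q$ and is trivial otherwise; $M \le N+r$ becomes $(r-q)t \le r$ by the same reasoning; and $0 \le M+q$ becomes $pt + q(1-t) \ge 0$, which is manifest. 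Once all nine are checked, the interval contains an integer $a$, and the triple $(a,\,a-M,\,a-N)$ records the desired sub-multiset. I expect this sign bookkeeping---keeping track of the cases where $m$ or $n$ is negative---to be the only real obstacle; everything else is routine.
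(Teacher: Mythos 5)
Your argument is correct, and it takes a genuinely different route from the paper's. The paper proves the lemma by sign casework on $(m,n)$ (four cases: both nonpositive, mixed signs in two cases, both positive), in each case exhibiting an explicit sub-multiset by counting how many copies of $(1,1)$, $(-1,0)$, $(0,-1)$ the hypothesis forces $S$ to contain and then taking the scaled-down numbers of each. Your reduction to the triple $(a,b,c) = (a,\,a-M,\,a-N)$ and the single feasibility condition $\max(0,M,N) \le \min(p,\,M+q,\,N+r)$ subsumes all of that casework in one uniform check: I verified that all nine inequalities do hold under $m=p-q$, $n=p-r$, $0<t<1$, $p,q,r\ge 0$, each by the same "scaling a quantity by $t<1$ cannot push it past the unscaled bound" observation you describe, and since all six endpoints are integers, nonemptiness of the real interval immediately yields an integer $a$. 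In effect the paper's four constructions correspond to particular choices of $a$ in your interval (e.g.\ $a = M$ in its Case 4), so your argument is the systematic version of the same underlying idea; what it buys is the elimination of the sign bookkeeping as separate cases, at the cost of having to state and check the nine comparisons explicitly. To make the write-up complete you should display all nine (or at least note that the remaining six are either trivial, such as $M \le M+q$, or follow by the symmetry $q \leftrightarrow r$), but there is no gap in the approach.
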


\begin{proof}
We do some casework on the signs of $(m, n)$. In all of the cases, we rely on the fact that $k < 1 \implies |mk| < |m|, |nk| < |n|$. 

Case 1: $m, n \le 0$. In this case we know that we must have at least $m$ copies of $(-1, 0)$ and at least $n$ copies of $(0, -1)$ in order for the sum of the direction vectors to be $(m, n)$, so obviously we can choose a subset composed of $(-1, 0)$ and $(0, -1)$ vectors to form $(mk, nk)$. 

Case 2: $m > 0, n \le 0$. The sum of the direction vectors in the set $S$ sum to $(m, n)$ so we have at least $m$ copies of $(1, 1)$. This also means that we have at least $|n| + m$ copies of $(0, -1)$. However, in order to pick a subset of $S$ with direction sum $(mk, nk)$, we only need $mk < m$ copies of $(1, 1)$ and $|nk| + mk < |n| + m$ copies of $(0, -1)$, so we are able to pick a valid subset in this case. 

Case 3: $m \le 0, n > 0$. This is the same as case 2. 

Case 4: $m > 0, n > 0$. Without loss of generality we assume $m \ge n$. Then we must have at least $m$ copies of $(1, 1)$ in $S$ and at least $m - n$ copies of $(0, -1)$ in $S$. However, to pick a subset with direction sum equal to $(mk, nk)$, we only need $mk < m$ copies of $(1, 1)$ and $mk - nk < m - n$ copies of $(0, -1)$, so we can also pick a subset in this case. 
\end{proof}

\begin{lemma}
\label{lemma5.2}
Any genus $1$ planar tropical curve $\Gamma$ with degree $d > 1$ and $3$ vertices in its cycle is connected to a genus $1$ planar tropical curve with $3$ vertices and degree $d$ such that the effective degree of this curve is $d' \le 1$ or connected to a genus $1$ planar tropical curve with at most two vertices. 
\end{lemma}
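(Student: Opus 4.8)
The plan is to induct on the effective degree $d'$. The key preliminary observation is that since the curve has degree $d$, the multiset of all unbounded edge directions is $\{(1,1),(-1,0),(0,-1)\}$, each with multiplicity $d$, and this sums to $(0,0)$; because every unbounded edge is attached to one of the three cycle vertices, the three UB-sums satisfy $\vec{x_1}+\vec{x_2}+\vec{x_3}=(0,0)$. Thus achieving $d'\le 1$ is exactly the problem of redistributing the unbounded edges among the three vertices until every coordinate of every UB-sum lies in $\{-1,0,1\}$. If $d'\le 1$ already, we are done, so assume $d'\ge 2$. Since the direction $(1,1)$ is fixed and $(-1,0)\leftrightarrow(0,-1)$ under interchanging the two coordinates, the degree multiset is symmetric in $x$ and $y$, so I may assume without loss of generality that the maximum is attained in the $x$-coordinate, i.e.\ $d'=d'_x\ge 2$, say at a vertex $V_1$ whose $x$-coordinate $a_1$ has maximal magnitude.

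The core tool is a charge-transfer move assembled from Section 4. Note that, unlike the $n\ge 4$ case, three-edge degeneration is unavailable for a triangle, so the only way to rearrange charge is to first regenerate up to a quadrilateral and then degenerate back down. Starting from the triangle, I would split the extreme vertex $V_1$ by a triangle-quadrilateral regeneration (Lemma~\ref{lemma4.3}): using Lemma~\ref{lemma5.1} I select a subset of the unbounded edges at $V_1$ whose UB-sum is a proportional reduction $(a_1 k, b_1 k)$ of $V_1$'s UB-sum, split $V_1$ into two new vertices with the split-off one carrying this subset, and arrange the split so that this vertex is adjacent to the neighbour $V_j$ whose $x$-coordinate is most negative (which exists because $a_2+a_3=-a_1\le -2$). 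Lemma~\ref{lemma4.3} guarantees that one of the two orientations of the regenerated edge yields a non-self-intersecting quadrilateral, and $V_1$ indeed has the required two unbounded edges and nonzero UB-sum since $a_1\ge 2$. I then apply a quadrilateral-triangle degeneration merging the split-off vertex into $V_j$, returning to a triangle whose UB-sums are the old ones with the chosen charge moved from $V_1$ to $V_j$. Because $a_1\ge 2$ forces at least two $(1,1)$ edges at $V_1$ while $a_j\le -1$, moving charge toward $V_j$ pushes $|a_1|$ toward $0$, and iterating should drive $d'$ down.

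The main obstacle is to make a single transfer \emph{strictly} decrease the relevant potential while simultaneously obeying two independent feasibility constraints: the arithmetic constraint that the split-off charge be an actual subset-sum of the edges present (handled by Lemma~\ref{lemma5.1}, which only guarantees the proportional splits $(a_1 k, b_1 k)$), and the geometric constraint that the regenerated quadrilateral be non-self-intersecting (handled by Lemma~\ref{lemma4.3}). The delicate point is that any transfer lowering $|a_1|$ also perturbs the $y$-coordinates, so I must track $d'_x$ and $d'_y$ together and argue, using the $x\leftrightarrow y$ symmetry and the constraint $\sum\vec{x_i}=(0,0)$, that no admissible move raises the overall maximum. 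I expect the cleanest route is to fix a potential such as $\sum_i\bigl(|a_i|+|b_i|\bigr)$ and show that each admissible transfer strictly decreases it, so that minimality of the potential forces every coordinate into $\{-1,0,1\}$.

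Finally, I would account for the alternative conclusion. If at some stage a required regeneration direction is forced to be $\vec{0}$, or a degeneration shrinks the relevant bounded edge to length $0$ and thereby makes two cycle vertices coincide (Lemma~\ref{lemma3.1}), the process terminates in a curve with at most two vertices, which is precisely the second alternative of the statement. Verifying that one of these two terminations always occurs, and in particular that the reduction cannot stall with $d'\ge 2$ and three vertices still present, is where the real work lies.
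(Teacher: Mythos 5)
Your high-level strategy matches the paper's: induct on the effective degree, split the extremal vertex by a triangle-quadrilateral regeneration (Lemma~\ref{lemma4.3}), degenerate back to a triangle, and argue the effective degree drops. But the step you yourself flag as ``where the real work lies'' is exactly the content of the lemma, and the two mechanisms you propose for it do not work as stated. First, your use of Lemma~\ref{lemma5.1} to split off a \emph{proportional} piece $(a_1k, b_1k)$ of $V_1$'s UB-sum can fail outright: that lemma requires $a_1k$ and $b_1k$ to both be integers with $0<k<1$, so when $\gcd(a_1,b_1)=1$ (e.g.\ UB-sum $(2,1)$) no admissible $k$ exists. The paper instead splits off a concrete small piece --- a single $(1,1)$, a single $(-1,0)$, or a $(1,1)$ paired with a $(0,-1)$ --- whose presence at $V_1$ is forced by a sign analysis of $(d',y)$, and which is chosen precisely so that both coordinates of both new vertices stay controlled. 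Second, you assume you can ``arrange'' the quadrilateral-triangle degeneration to merge the split-off vertex into your chosen neighbour $V_j$. The degeneration does not grant that choice: which adjacent pair combines is dictated by the geometry of the quadrilateral (in the paper's discussion, by which angle is largest). The paper's proof is structured around this obstruction --- in each of its three cases it verifies that \emph{either} possible merge strictly decreases $d'_x$ without increasing $d'_y$, using the fact that the other two vertices' $x$-coordinates must have sign opposite to $d'$ (else maximality of $d'$ is violated, since the three UB-sums sum to $\vec{0}$). Without that two-sided check, your transfer can be undone by the merge going the other way, and your proposed potential $\sum_i(|a_i|+|b_i|)$ is never shown to decrease.

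You also leave the first alternative of the conclusion (``at most two vertices'') to an informal remark about edges shrinking to zero. In the paper this alternative arises at a specific, identifiable point: when the regenerated quadrilateral is a trapezoid, one shrinks the two non-parallel sides to collapse to a two-vertex curve. Making that case explicit is needed for the induction to be exhaustive.
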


\begin{proof}
Let $d'$ be the effective degree of our planar tropical curve $\Gamma$. We wish to apply a series of triangle-quadrilateral regenerations and quadrilateral-triangle degenerations to connect any planar tropical curves with degree $d$ and three vertices to a curve with three vertices and effective degree at most $1$ or a curve with at most two vertices. To do so, we induct on the effective degree of the graph. 

Base Case: $d' = 0, 1$. In this case, $d' \le 1$, so we are done.

Inductive Step: Without loss of generality assume that $d'$ appears in the $x$-coordinate of the UB-sum of some vertex $V$ so that $V$ has UB-sum equal to $(d', y)$. In addition, because $d'$ is the effective degree, we have $|y| \le |d'|$. We apply Lemma~\ref{lemma4.3} to split $V$ into two vertices connected by an edge, regenerating this new edge and turning our triangle into a non-self-intersecting quadrilateral. If the regenerated quadrilateral is a trapezoid, we may easily shrink the two bounded edges that are not parallel to create a planar tropical curve with two vertices, as desired. If the quadrilateral regenerated is not a trapezoid, then we turn the quadrilateral into a different triangle using quadrilateral-triangle degeneration. We show that this quadrilateral-triangle degeneration does not increase the $y$ effective degree $d'_y$ and strictly decreases the $x$ effective degree $d'_x$ of the planar tropical curve. We do some casework based on the sign of $d$. 

Case $1$: If $d' > 0$ and $y < d'$, then we split $V$ into two new vertices, one vertex with two unmarked unbounded edges with directions $(1, 1)$ and $(0, -1)$ and the other with UB-sum $(d' - 1, y)$. Notice that because $d' > 0$ and $y < d'$, we are always guaranteed to have at least one unmarked unbounded edge with direction $(1, 1)$ and one unmarked unbounded edge with direction $(0, -1)$ connected to $V$. Let the other two vertices in the quadrilateral have UB-sums equal to $(a, b), (c, d)$. We know that $a, c \le 0$, else this is a contradiction of the maximality of $d'$, as $a + c + d' = 0$. Now when we degenerate our quadrilateral into a triangle once more, either $(1, 0)$ combines with $(a, b)$ or $(d'-1, y)$ combines with $(c, d)$. If $(1, 0)$ combines with $(a, b)$ then the new vertex has UB-sum $(a+1, b)$. Because $a \le 0$ and $b$ is unchanged, we know $d'_x$ decreases while $d'_y$ stays constant. If $(d' - 1, y)$  combines with $(c, d)$, the new vertex has UB-sum $(d' - 1 + c, y + d)$. However, as $c \le 0$ and $y + d = -b$, we know $d'_x$ decreases while $d'_y$ will not increase. 

Case $2$: If $d' > 0$ and $y = d'$, then we split $V$ into two new vertices, one vertex with a single unmarked unbounded edge with direction $(1, 1)$ and another vertex with UB-sum $(d' -1, y - 1)$. Because $d' > 0$ we are always guaranteed at least one unmarked unbounded edge with direction $(1, 1)$ connected to $V$. If the other vertices have UB-sums $(a, b), (c, d)$ then we must have $a, b, c, d \le 0$ else there will be a contradiction of the maximality of $d'$. When we degenerate our quadrilateral into a triangle, either $(1, 1)$ combines with $(a, b)$ or $(d' - 1, y - 1)$ combines with $(c, d)$. If $(1, 1)$ combines with $(a, b)$ the new vertex will have UB-sum $(a + 1, b + 1)$, but since $a, b \le 0$, we know $d'_x$ decreases and $d'_y$ does not increase. Similarly, if $(d' - 1, y - 1)$ combines with $(c, d)$, we get a vertex with UB-sum $(d' + c - 1, y + d - 1)$. Since $c, d \le 0$ and $d' - 1 = y - 1 > 0$, we know $d'_x$ will decrease while $d'_y$ will not increase. 

Case $3$: If $d' < 0$, then we split $V$ into two new vertices, one vertex with a single unmarked unbounded edge with direction $(-1, 0)$ and the other with UB-sum $(d' + 1, y)$. Let the other two vertices in our quadrilateral have directions $(a, b), (c, d)$ with $a, c \ge 0$, else there would be a contradiction of the maximality of $d'$. If $(a, b)$ combines with $(-1, 0)$ then we get a vertex with UB-sum $(a - 1, b)$. However, $a \ge 0$, so $d'_x$ will decrease while $d'_y$ remains unchanged. If $(c, d)$ combines with $(d' + 1, y)$ then we obtain a vertex with UB-sum $(c + d' + 1, y + d)$. However, $c \ge 0$ while $d' + 1 < 0$ so the $d'_x$ will decrease. The $y$-coordinate is $y + d = -b$, so $d'_y$ does not increase. 

We have shown that applying a regeneration and a degeneration to our planar tropical curve $\Gamma$ causes $d'_x$ to decrease and $d'_y$ to not increase, giving us a new curve $\Gamma'$. By the same reasoning, we may then apply a regeneration and a degeneration process that causes $d'_y$ to decrease and $d'_x$ to not increase, giving us another curve $\Gamma''$. Now, our resulting curve $\Gamma''$ has a strictly smaller effective degree than the effective degree of $\Gamma$, so we may apply the induction hypothesis, completing the proof.
\end{proof}

\begin{theorem}
\label{theorem5.3}
The moduli space $\mathcal{M}_{=1, n}(d)$ of degree $d$ and genus $1$ planar tropical curves with $n$ marked unbounded edges is connected.
\end{theorem}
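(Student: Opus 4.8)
The plan is to show $\mathcal{M}_{=1,n}(d)$ is path-connected by joining every curve to one fixed target. The natural target is the curve $C_0$ consisting of a single vertex carrying all unbounded edges together with one self-loop: a one-vertex genus $1$ curve must have a self-loop of direction $\vec{0}$, since a loop parameterized by $t \mapsto a + t\vec{v}$ returns to its basepoint only if $l\vec{v} = \vec{0}$, so $C_0$ is the unique one-vertex combinatorial type and corresponds to a single ray in the moduli space. Because every degeneration and regeneration is a continuous map into $\mathcal{M}_{=1,n}(d)$ (Lemma~\ref{lemma3.1} together with the definition of degeneration), joining each $\Gamma$ to $C_0$ by such maps and concatenating yields a path between any two curves. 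First I would apply the standing reductions: Proposition~\ref{prop3.1} lets me assume every bounded edge lies on the cycle $\Gamma_c$; Propositions~\ref{prop3.2} and~\ref{prop3.3} let me assume there are no marked unbounded edges (so it suffices to treat $\mathcal{M}_{=1,0}(d)$) and that every vertex carries at least one unmarked unbounded edge. Repeated three-edge degeneration (Lemma~\ref{lemma4.2}) then reduces the number of vertices on $\Gamma_c$ to at most three.

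With at most three cycle-vertices, I would induct on the effective degree $d'$. In the base case $d' \le 1$, the bound proved in Lemma~\ref{lemma3.2} --- which in fact bounds each edge direction by the effective degree rather than by $d$ --- forces every cycle edge to have direction $\vec{0}$. The cycle-closure equation then imposes no constraint on the edge lengths, so I may shrink edges one at a time, merging all cycle-vertices into a single vertex and leaving one zero-direction self-loop; this is exactly $C_0$. (This base case subsumes Theorem~\ref{theorem3.3}, since degree $1$ forces $d' \le 1$.) For the inductive step $d' \ge 2$ with three vertices, I would run the mechanism of Lemma~\ref{lemma5.2}: pick a vertex $V$ realizing the maximal coordinate, split it by a triangle-quadrilateral regeneration into two vertices with prescribed UB-sums, and then degenerate the resulting non-self-intersecting quadrilateral back to a triangle. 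Lemma~\ref{lemma5.1} guarantees that the prescribed UB-sums can actually be realized by distributing the available unbounded edges, and Lemma~\ref{lemma4.3} guarantees the regenerated quadrilateral is non-self-intersecting; the casework in Lemma~\ref{lemma5.2} shows that this regenerate-then-degenerate step strictly lowers $d'_x$ without raising $d'_y$, and a symmetric step lowers $d'_y$ without raising $d'_x$, so a pair of such steps strictly lowers $d'$ and the induction descends to the base case.

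The point requiring the most care is to keep the curve at three vertices throughout the descent rather than collapsing to a two-vertex bigon, and I expect this to be the main obstacle. A bigon with opposite nonzero UB-sums $\vec{x}, -\vec{x}$ has two parallel edges of a common nonzero direction, and then the cycle-closure equation rigidly forces the two lengths to be proportional; consequently one cannot shrink either edge without collapsing the genus, so such a configuration is terminal under length-shrinking and cannot be pushed toward $C_0$ directly. The remedy is to always degenerate the regenerated quadrilateral to a triangle, never to a bigon: when the quadrilateral is a trapezoid I would shrink a single one of its two non-parallel sides (a valid quadrilateral-triangle degeneration, since Lemma~\ref{lemma4.3} makes the quadrilateral non-self-intersecting), which merges exactly one of the ``good'' adjacent pairs analyzed in Lemma~\ref{lemma5.2} and hence still lowers the effective degree while preserving three vertices. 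The descent therefore stays at three vertices until $d' \le 1$, at which moment the edge directions become $\vec{0}$, the length rigidity disappears, and the base-case collapse to $C_0$ applies. Assembling these steps produces a path from an arbitrary $\Gamma$ to $C_0$, proving that $\mathcal{M}_{=1,n}(d)$ is connected.
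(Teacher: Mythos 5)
Your overall architecture matches the paper's: reduce to $\mathcal{M}_{=1,0}(d)$ with all bounded edges on $\Gamma_c$, use three-edge degeneration to get down to at most three cycle vertices, and then descend on the effective degree via the regenerate-then-degenerate mechanism of Lemma~\ref{lemma5.2} until Lemma~\ref{lemma3.2} forces zero directions and the curve collapses to the one-vertex self-loop. The place where you diverge is exactly the place where your argument breaks: the two-vertex (bigon) configuration. You correctly observe that a bigon with antiparallel nonzero edge directions is rigid under pure length-shrinking, but your proposed remedy --- never pass through a bigon, and in the trapezoid case ``shrink a single one of its two non-parallel sides'' as a quadrilateral-triangle degeneration --- is not a valid move. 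To shrink a leg $BC$ of a trapezoid $ABCD$ with $AB \parallel CD$ while preserving the directions of the other three edges, the vertices $B$ and $C$ must merge at a point lying on both the ray from $A$ in the direction of $AB$ and the ray from $D$ in the direction of $DC$; these rays are parallel and distinct, so they never meet. (For a non-parallelogram trapezoid one can instead shrink the shorter parallel side, but that side may be the freshly regenerated edge --- undoing your progress --- and for a parallelogram no single edge can be shrunk at all.) Non-self-intersection from Lemma~\ref{lemma4.3} guarantees that \emph{some} adjacent pair can be combined, not the pair you want.

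The missing idea is that one escapes the bigon by a \emph{regeneration}, not by avoiding it. This is precisely the role of Lemma~\ref{lemma5.1} in the paper's proof, which your proposal never actually uses: given a bigon with edge directions $(c_1 m, c_1 n)$ and $(c_2 m, c_2 n)$, choose a vertex $V$ carrying at least two unmarked unbounded edges (possible since $d \ge 2$), and use Lemma~\ref{lemma5.1} to select a sub-multiset of its unbounded edges whose directions sum to $(c_1 m, c_1 n)$. Splitting $V$ accordingly produces a third, regenerated edge of direction $(0,0)$; the two original antiparallel edges can then be shrunk simultaneously (their lengths staying proportional, as the closure equation demands), leaving the single zero-direction self-loop. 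You should accept the two-vertex case as a terminal output of Lemma~\ref{lemma5.2} (as the paper does) and handle it by this splitting argument, rather than trying to keep the descent at three vertices.
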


\begin{proof}
Assume $d \ge 2$, as we have proven the case $d = 1$ in Theorem~\ref{theorem3.3}. By Proposition~\ref{prop3.2} and Proposition~\ref{prop3.3}, we only need to show $\mathcal{M}_{=1, 0}(d)$ is connected. To prove that the moduli space $\mathcal{M}_{=1, 0}(d)$ is connected, we show that any planar tropical curve $\Gamma$ in this moduli space is connected to the trivial curve consisting of one vertex, a single bounded edge with direction $(0, 0)$ linking that vertex to itself, and all unbounded edges attached to that vertex. 

We first repeatedly apply Lemma~\ref{lemma4.2} and use three edge degeneration on any three consecutive bounded edges in the cycle. Each time three edge degeneration is performed, we shrink either $1$ or $2$ bounded edges, repeating this until we have $1$, $2$, or $3$ vertices left in $\Gamma$. 

Case $1$: If there is only $1$ vertex in the cycle $\Gamma_c$, then it must be the trivial curve with a single bounded edge of direction $(0, 0)$ and all unbounded edges attached to this vertex, and we are done. 

Case $2$: If there are two vertices, the cycle consists of two bounded edges each attached to the two vertices. Because our the shape of the cycle $\Gamma_c$ must be a line, the direction vectors of the two bounded edges are scalar multiples of each other. If the two bounded edges have direction $(0, 0)$, we may apply a degeneration to shrink one of the bounded edges to $0$ and arrive at the one vertex case.

Assume the two bounded edges do not have direction $(0, 0)$. To prove we may always degenerate this curve into a curve with only one vertex, we make use of Lemma~\ref{lemma5.1}. Because we know that the degree of the planar tropical curve is at least $2$, some vertex $V$ must have at least two unbounded edges connected to it. Suppose that the two bounded edges in have directions $(c_1m, c_1n)$ and $(c_2m, c_2n)$ where $m, n \in \mathbb{Z}$ and $c_1, c_2 \in \mathbb{Z}_{+}$. Then we may apply Lemma~\ref{lemma5.1} with the set $S$ as the set of direction vectors of the unmarked unbounded edges connected to $V$ and create a subset $U$ of unmarked unbounded edges with direction summing up to $(c_1m, c_1n)$. Then we perform a regeneration at $V$ to create two new vertices $V_1, V_2$ such that $V_1$ is connected to the bounded edge with direction $(c_1m, c_1n)$ and contains the unmarked unbounded edges in $U$ and $V_2$ is connected to the bounded edge with direction $(c_2m, c_2n)$ and contains the unmarked unbounded edges in $S - U$. We see that the regenerated edge must have direction $(0, 0)$. We may now shrink the other two bounded edges through a degeneration to arrive at the trivial curve. 

Case $3$: If the result of the degeneration methods turns the planar curve into a triangle, we apply Lemma~\ref{lemma5.2} to degenerate the planar curve into either a curve with two vertices or a curve with effective degree $d' \le 1$. If the curve has two vertices, we may apply the argument in Case 2. On the other hand, if we have a planar tropical curve with three vertices and $d' \le 1$, we may utilize Lemma~\ref{lemma3.2}. We have a genus $1$ curve with three vertices and because $d' \le 1$, we may apply the exact same argument given in Lemma~\ref{lemma3.2} to conclude there must be a bounded edge with direction $\vec{0}$. Now we may degenerate our triangle into the trivial curve with one vertex, a single bounded edge with direction $(0, 0)$ that makes up a self-loop, and all unbounded edges connected to the lone vertex.

We conclude that we can degenerate any planar tropical curve of degree $d$ and genus $1$ into the trivial curve, which proves connectedness of $\mathcal{M}_{=1, n}(d)$.
\end{proof}

\section{Conclusion}

To summarize, we introduced a set of maps called degenerations and regenerations that allowed us to shrink and regenerate bounded edges in tropical curves. We then focused on three specific maps (namely three-edge degeneration, quadrilateral-triangle degeneration, and triangle-quadrilateral regeneration) and applied these maps to reduce any curve $\Gamma \in \mathcal{M}_{=1, n}(d)$ to a planar tropical curve with at most three vertices. Afterwards, we employed Lemma~\ref{lemma5.1} and Lemma~\ref{lemma5.2} to prove that any planar tropical curve with at most three vertices is connected to the trivial planar tropical curve with only one vertex, proving the connectedness of the moduli space $\mathcal{M}_{=1, n}(d)$.

We are contining the work on our conjuecture that the moduli space is connected for planar tropical curves of higher genus. Some of the degenerations we use in this paper are useful in this case but further machinery needs to be developed to prove connectedness. In addition, it would be interesting to investigate a different moduli space $\mathcal{M}_{1, n}(d)$ (defined in \cite{kerber2006counting}) of planar tropical curves with $n$ marked edges, degree $d$, and genus $\le 1$ and prove that this moduli space is connected. We expect that the theorems proved in this paper in combination with the result that the moduli space is connected for abstract tropical curves will imply that this space is connected. Another query we find interesting is the connectedness of the moduli space of curves that are mapped to $\mathbb{R}^1$ or $\mathbb{R}^3$ instead of planar tropical curves mapped to $\mathbb{R}^2$.

\section{Appendix}

We extend the result proven in this paper to linear tropical curves, the one-dimensional analogue of planar tropical curves.

\subsection{Defining Linear Tropical Curves}

We provide several definitions regarding linear tropical curves. Most of these definitions are variants of those found in Section 2.

\begin{defn}
A linear tropical curve $\Gamma$ consists of an underlying abstract tropical curve along with a mapping $h$ taking the linear tropical curve to $\mathbb{R}^1$. Linear tropical curves are denoted by $(\Gamma, h, x_1, x_2, \ldots, x_n)$, sometimes referred to as $\Gamma$ for short, the same way we denote planar tropical curves. However, to avoid confusion, all curves we refer to in this section will be linear tropical curves.  Each edge in $\Gamma$ is given an integral one-dimensional direction vector, a vector of the form $(a)$ where $a \in \mathbb{Z}$. For simplicity, we treat the one-dimensional integral direction vector as an integer. For a flag $F$ we denote the direction vector of $F$ as $v(F)$. Edges of length $l$ are mapped to $a + v \cdot l$ where $a$ is a point in $\mathbb{R}^1$ and $v$ is the direction vector of the edge. Marked unbounded edges have direction $0$ and the balancing condition at each vertex is still satisfied: the sum of the direction vectors of all flags at a vertex must be $0$.
\end{defn}

\begin{defn}
The $\textit{unique cycle}$ and $\textit{combinatorial type}$ of a linear tropical curve are defined the same way as for a planar tropical curve, as given in Definitions 2.6 and 2.7. The unique cycle of a genus $1$ linear tropical curve $\Gamma$ will also be denoted by $\Gamma_c$.
\end{defn}

\begin{defn}
The $\textit{degree}$ $d$ of a linear tropical curve $\Gamma$ is the multiset of directions vectors of all unmarked unbounded edges. If the multiset of direction vectors consists of $\vec{-1}, \vec{1}$ each occurring $d$ times, then we simply say that the degree is $d$.
\end{defn}

\begin{defn}
The $\textit{moduli space of linear tropical curves}$ with genus $1$, $n$ marked unbounded edges, and degree $d$ is the geometric space whose points correspond to isomorphism classes of these linear tropical curves and is denoted by $\mathcal{M'}_{=1, n}(d)$.
\end{defn}

\begin{defn}
Given a linear tropical curve of type $\alpha$, the $\textit{moduli space of }$ $\alpha$ is denoted by $\mathcal{M'}^{\alpha}_{=1, n}(d)$ and is defined to be the subset of $\mathcal{M'}_{=1, n}(d)$ consisting of all linear tropical curves of type $\alpha$. 
\end{defn}

\begin{defn}
Two linear tropical curves $\Gamma_1, \Gamma_2 \in \mathcal{M'}_{=1, n}(d)$ are $\textit{connected}$ if there exists a path between the points corresponding to $\Gamma_1, \Gamma_2$ in the moduli space $\mathcal{M'}_{=1, n}(d)$.
\end{defn}

\begin{defn}
Given a linear tropical curve $\Gamma \in \mathcal{M'}_{=1, n}(d)$ and a vertex $V \in \Gamma$, the \textit{UB-sum} of $V$ is the sum of the directions of the unbounded edges connected to $V$.
\end{defn}

\subsection{Connectedness of \texorpdfstring{$\mathcal{M'}_{=1, n}(d)$}{TEXT}}

We will prove that the moduli space $\mathcal{M'}_{=1, n}(d)$ of linear tropical curves with genus $1$, $n$ marked edges, and degree $d$ is connected. Again, our strategy is to prove all curves are connected to the trivial curve with only one vertex. We extend our results for planar tropical curves; many of these results still hold for linear tropical curves.

It is easy to see that Lemma~\ref{lemma3.1} remains valid if we consider linear tropical curves instead of planar tropical curves. Degenerations and regenerations can be applied to shrink edges of some linear tropical curve $\Gamma$ the same way they can be applied to planar tropical curves.

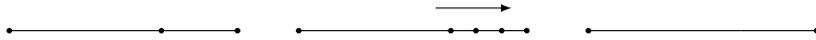
\begin{figure}[ht]
\centering
\begin{tikzpicture}

\draw (-1, 0) -- (1, 0);

\draw (1, 0) -- (2, 0);

\draw[fill=black] (-1, 0) circle (0.03 cm);

\draw[fill=black] (1, 0) circle (0.03 cm);

\draw[fill=black] (2, 0) circle (0.03 cm);

\end{tikzpicture}
\hspace{0.5cm}
\begin{tikzpicture}

\draw (-1, 0) -- (1, 0);

\draw (1, 0) -- (2, 0);

\draw[-latex] (0.8, 0.3) -- (1.8, 0.3);

\draw[fill=black] (-1, 0) circle (0.03 cm);

\draw[fill=black] (1, 0) circle (0.03 cm);

\draw[fill=black] (1.33, 0) circle (0.03 cm);

\draw[fill=black] (1.67, 0) circle (0.03 cm);

\draw[fill=black] (2, 0) circle (0.03 cm);

\end{tikzpicture}
\hspace{0.5cm}
\begin{tikzpicture}

\draw (-1, 0) -- (1, 0);

\draw (1, 0) -- (2, 0);

\draw[fill=black] (-1, 0) circle (0.03 cm);

\draw[fill=black] (2, 0) circle (0.03 cm);

\end{tikzpicture}
\setlength{\belowcaptionskip}{-3pt}
\caption{Degeneration of a linear tropical curve.}
\end{figure}

Degenerations and regenerations of linear tropical curves allow us to extend Proposition~\ref{prop3.1} to linear tropical curves as well. The proof of this proposition for linear tropical curves is the same as the proof for planar tropical curves. The modified Proposition~\ref{prop3.1} tells us that all linear tropical curves $\Gamma \in \mathcal{M'}_{=1, n}(d)$ are connected to a linear tropical curve with no bounded edges other than those in $\Gamma_c$.

We adjust Propositions~\ref{prop3.2} and~\ref{prop3.3} to hold for linear tropical curves. Again, the proofs are the same as the proofs for planar tropical curves.The modified Proposition~\ref{prop3.2} tells us that any genus $1$ linear tropical curve $\Gamma$ with $n$ marked unbounded edges and degree $d \ge 1$ is connected to another linear tropical curve $\Gamma'$ such that all vertices $V \in \Gamma'$ are connected to at least one unmarked unbounded edge. The modified Proposition~\ref{prop3.3} lets us  conclude that any series of degenerations and regenerations valid for a curve with no marked edges will apply to curves with marked edges and vice versa. Thus, when proving the connectedness of $\mathcal{M'}_{=1, n}(d)$, it suffices to show that the space of linear tropical curves with no marked edges and no bounded edges outside of $\Gamma_c$ is connected. We now assume that all linear tropical curves have no bounded edges oustide of $\Gamma_c$ and no marked edges.

We extend the idea of three-edge degenerations to linear tropical curves, renaming the process ``two-edge degeneration''. Notice that Figure 16 is an example of a two-edge degeneration.

\begin{defn}
In a genus $1$ linear tropical curve $\Gamma$ with at least three bounded edges in $\Gamma_c$, \textit{two-edge degeneration} is a degeneration that may be applied to any two consecutive bounded edges in $\Gamma_c$ that shrinks the length of one of the two edges to $0$.
\end{defn}

\begin{lemma}
\label{lemma7.1}
In a linear tropical curve $\Gamma$ with genus $1$ and $n > 2$ vertices in its cycle $\Gamma_c$, we may apply a two-edge degeneration to any two consecutive bounded edges in $\Gamma_c$.
\end{lemma}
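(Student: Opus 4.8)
The plan is to mimic the planar three-edge degeneration of Lemma~\ref{lemma4.2}, but to exploit the fact that in $\mathbb{R}^1$ the closure condition imposes only one scalar constraint rather than two, so that two consecutive edges already suffice. Label the three vertices of the two consecutive bounded edges as $1, 2, 3$, with $e_{12}$ of direction $v_1$ and length $l_1$ and $e_{23}$ of direction $v_2$ and length $l_2$; here $v_1, v_2 \in \mathbb{Z}$ are fixed and $l_1, l_2 > 0$. Since $n > 2$, vertices $1$ and $3$ are distinct and the remainder of the cycle hangs off them, so the strategy is to keep the images of vertices $1$ and $3$ fixed in $\mathbb{R}^1$ (which preserves the directions, hence the combinatorial type, of every edge outside $\{e_{12}, e_{23}\}$) while sliding the image of vertex $2$.

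First I would record the single scalar constraint that fixing vertices $1$ and $3$ imposes. Writing the image of vertex $i$ as $P_i$, we have $P_2 = P_1 + l_1 v_1$ and $P_3 = P_2 + l_2 v_2$, so the quantity $l_1 v_1 + l_2 v_2 = P_3 - P_1$ must stay constant; call it $c$. This is one linear equation in the two unknowns $(l_1, l_2)$, cutting out a one-parameter affine family of admissible length pairs, which is precisely the extra freedom needed to degenerate. I would then contrast this with the planar situation: fixing two outer endpoints in $\mathbb{R}^m$ imposes $m$ scalar constraints, so a nontrivial family requires at least $m+1$ consecutive edges; with $m = 1$ this is two edges, explaining the name two-edge degeneration.

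Next I would produce the degeneration by sliding along this family. If either direction vanishes, say $v_1 = 0$, then $e_{12}$ already maps to a point and we simply shrink $l_1 \to 0$ with $l_2$ held fixed, moving nothing else; so assume $v_1, v_2 \ne 0$. The nonnegativity requirements $l_1 = (P_2 - P_1)/v_1 \ge 0$ and $l_2 = (P_3 - P_2)/v_2 \ge 0$ confine $P_2$ to an intersection of two rays, which is a bounded interval or a ray with a finite endpoint, and we currently sit strictly inside it with $l_1, l_2 > 0$. Moving $P_2$ monotonically toward a finite endpoint yields a continuous family of curves along which both lengths remain nonnegative and at least one of $l_1, l_2$ decreases to $0$. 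By the linear analogue of Lemma~\ref{lemma3.1}, this family extends continuously to its limit, giving a genuine degeneration that shrinks one of the two edges, as required.

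The main obstacle I expect is the verification in the last step that one can always reach a length of $0$ without driving the other length negative first, i.e.\ that the admissible interval for $P_2$ genuinely has a finite endpoint at which a length vanishes. This is where the sign bookkeeping lives: one checks that when $v_1, v_2$ have opposite signs the two lengths decrease together (so the shorter one collapses), while when they share a sign they trade off (so either can be driven to $0$), but in every case with nonzero directions the family cannot keep both lengths positive indefinitely. The degenerate zero-direction case must be dispatched separately, as above, since there the constraint no longer pins down the collapsing length.
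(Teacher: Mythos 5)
Your proposal is correct and takes essentially the same approach as the paper: hold the two outer vertices fixed, slide the middle vertex along the one-parameter family cut out by the single scalar constraint $l_1 v_1 + l_2 v_2 = \text{const}$, and do sign casework on $v_1, v_2$ (your same-sign versus opposite-sign dichotomy is exactly the paper's dichotomy of whether the images of the two edges overlap). The paper's version is terse and leans on its figures, so your explicit treatment of the zero-direction case, the nonnegativity bookkeeping, and the appeal to the linear analogue of Lemma~\ref{lemma3.1} supplies details the paper omits rather than a different argument.
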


\begin{proof}
The proof of this lemma is almost trivial in the one dimensional case. Suppose we have vertices $1, 2, 3$, and our two consecutive edges are $e_{12}$ and $e_{23}$ where $e_{ij}$ connects vertex $i$ and $j$. Either the bounded edges $e_{12}$ and $e_{23}$ intersect on their interiors or they don't. If the interiors of $e_{12}$ and $e_{23}$ do not intersect, we simply perform the degeneration illustrated in Figure 16. In the other case, we can still shrink one of the bounded edges.

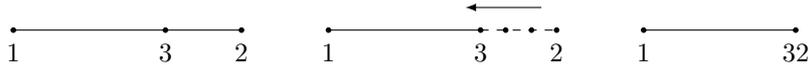
\begin{figure}[ht]
\centering
\begin{tikzpicture}

\draw (-1, 0) -- (1, 0);

\draw (1, 0) -- (2, 0);

\draw[fill=black] (-1, 0) circle (0.03 cm);

\draw[fill=black] (1, 0) circle (0.03 cm);

\draw[fill=black] (2, 0) circle (0.03 cm);

\node at (-1, -0.3) {$1$};

\node at (1, -0.3) {$3$};

\node at (2, -0.3) {$2$};

\end{tikzpicture}
\hspace{0.5cm}
\begin{tikzpicture}

\draw (-1, 0) -- (1, 0);

\draw[dashed] (1, 0) -- (2, 0);

\draw[latex-] (0.8, 0.3) -- (1.8, 0.3);

\draw[fill=black] (-1, 0) circle (0.03 cm);

\draw[fill=black] (1, 0) circle (0.03 cm);

\draw[fill=black] (1.33, 0) circle (0.03 cm);

\draw[fill=black] (1.67, 0) circle (0.03 cm);

\draw[fill=black] (2, 0) circle (0.03 cm);

\node at (-1, -0.3) {$1$};

\node at (1, -0.3) {$3$};

\node at (2, -0.3) {$2$};

\end{tikzpicture}
\hspace{0.5cm}
\begin{tikzpicture}

\draw (-1, 0) -- (1, 0);

\draw[fill=black] (-1, 0) circle (0.03 cm);

\draw[fill=black] (1, 0) circle (0.03 cm);

\node at (-1, -0.3) {$1$};

\node at (1, -0.3) {$32$};

\end{tikzpicture}
\setlength{\belowcaptionskip}{-3pt}
\caption{Two-edge degeneration.}
\end{figure}

\end{proof}

\begin{theorem}
The moduli space of genus $1$ linear tropical curves $\mathcal{M'}_{=1, n}(d)$ is connected for any nonnegative integer $n$ and any positive integer $d$.
\end{theorem}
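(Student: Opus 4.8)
The plan is to mirror the proof of Theorem~\ref{theorem5.3}, exploiting the fact that the one-dimensional setting is strictly simpler. As in the planar case, I would first invoke the linear analogues of Proposition~\ref{prop3.2} and Proposition~\ref{prop3.3} (stated in this section) to reduce to the subspace of curves having no marked unbounded edges and all bounded edges contained in $\Gamma_c$, with every vertex carrying at least one unmarked unbounded edge. It then suffices to connect an arbitrary such $\Gamma$ to the trivial curve: a single vertex with one self-loop of direction $0$ and all unbounded edges attached.

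Next I would collapse the cycle. Whenever $\Gamma_c$ has more than two vertices, Lemma~\ref{lemma7.1} lets me apply a two-edge degeneration to two consecutive bounded edges, shrinking one of them to zero and thereby combining two vertices. Repeating this strictly decreases the number of vertices, so after finitely many steps $\Gamma_c$ has at most two vertices. Here the linear case is genuinely easier than the planar one: two-edge degeneration succeeds unconditionally for any $n > 2$, so there is no obstruction analogous to getting ``stuck'' at a triangle, and consequently no analogue of Lemma~\ref{lemma5.2} (the effective-degree reduction) is needed.

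It remains to handle the one- and two-vertex cases. A one-vertex cycle is a single self-loop; the loop-closure condition $lv = 0$ forces its direction to be $0$, so the curve is already trivial. For two vertices, I would first dispose of $d = 1$ using the one-dimensional analogue of Lemma~\ref{lemma3.2}: the same closed-cycle computation shows every bounded edge has direction $a$ with $|a| < d$, so $d = 1$ forces all bounded directions to vanish and we simply shrink one of the two edges to reach the one-vertex case (this is the exact analogue of Theorem~\ref{theorem3.3}). For $d \ge 2$, the $2d$ unmarked unbounded edges force, by pigeonhole, some vertex $V$ to carry at least two of them, while the two bounded edges have integer directions $c_1$ and $c_2$ of opposite sign by cycle closure. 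I would then apply the linear analogue of Lemma~\ref{lemma5.1}---which in one dimension is immediate, since a multiset of $\pm 1$'s summing to $m$ trivially contains a subset summing to any integer of the same sign and smaller absolute value---to select a subset $U$ of the unbounded edges at $V$ so that splitting $V$ regenerates a bounded edge of direction $0$. Shrinking the two original bounded edges then leaves the trivial self-loop.

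The main thing to verify carefully is the two-vertex endgame: namely that the subset $U$ produced by the linear version of Lemma~\ref{lemma5.1} makes the regenerated edge have direction exactly $0$ (via the balancing condition at the split vertex), and that the ensuing degeneration of the two remaining bounded edges is legitimate. Everything else---the reductions and the cycle collapse---should transfer essentially verbatim from the planar arguments, since the geometric obstructions that complicated the planar case (self-intersection of quadrilaterals, and the casework of Lemma~\ref{lemma4.2}) simply do not arise on the line.
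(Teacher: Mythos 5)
Your proposal is correct and follows essentially the same route as the paper: reduce via the linear analogues of Propositions~\ref{prop3.2} and~\ref{prop3.3}, collapse the cycle to at most two vertices with two-edge degenerations (Lemma~\ref{lemma7.1}), and finish by splitting a vertex carrying at least two unbounded edges so the regenerated edge has direction $0$. The only (harmless) divergence is at $d=1$, where the paper argues directly that a two-vertex cycle is impossible, while you route through the one-dimensional analogue of Lemma~\ref{lemma3.2} to force all bounded directions to vanish; both work.
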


\begin{proof}
We will use Lemma~\ref{lemma7.1} to show that any linear tropical curve $\Gamma \in \mathcal{M'}_{=1, n}(d)$ is connected to the trivial linear tropical curve with only one vertex $V$, one bounded edge with direction $0$, and all unbounded edges attached to $V$.

First we resolve the $d = 1$ case. In this case, we claim any $\Gamma \in \mathcal{M'}_{=1, n}(1)$ can only have one vertex. Because the curve only has two unmarked unbounded edges, it can have at most two vertices. We show that this is impossible. If there are two vertices $V_1, V_2$, without loss of generality let $V_2$ lie to the right of $V_1$. Then the unbounded edge with direction $1$ is connected to $V_2$ while the unbounded edge with direction $-1$ is connected to $V_1$. This implies that one of the bounded edges connecting $V_1$ and $V_2$ has direction $0$ while the other bounded edge has direction $1$ (with direction pointing towards $V_2$). This means the length of the edge with direction $1$ is $0$, contradiction.

If $d > 1$, we use two-edge degeneration to shrink all bounded edges in $\Gamma_c$ until there are only two vertices $V_1, V_2$ in $\Gamma_c$. Then, we regenerate a bounded edge with direction $0$ using a similar idea to the one in Lemma~\ref{lemma5.1}. Without loss of generality, assume $V_2$ lies to the right of $V_1$. Because $d > 1$, one of $V_1, V_2$ must be connected to at least $2$ unbounded edges. Without loss of generality, assume this is $V_2$ and let $V_2$ have UB-sum $x \in \mathbb{Z}_{\ge 0}$. Let the directions of the two bounded edges $e_1, e_2$ be $x_1, x_2 \in \mathbb{Z}_{\ge 0}$, respectively, where the directions point towards $V_2$ and $x_1 + x_2 = x$. There are at least $x$ unbounded edges with direction $1$ connected to $V_2$, so we may perform a regeneration by distributing $x_1$ unbounded edges with direction $1$ to a new vertex connected to $e_1$ and the rest of the unbounded edges to the other new vertex connected to $e_2$. Then, the newly regenerated edge must have direction $0$. We proceed by shrinking $e_1$ and $e_2$, arriving at the trivial curve. 

We conclude that we can degenerate any linear tropical curve of degree $d$ and genus $1$ into the trivial curve, proving the connectedness of $\mathcal{M'}_{=1, n}(d)$.
\end{proof}

\section{Acknowledgements}

First and foremost, I am grateful to the MIT PRIMES program for providing me with this opportunity to perform research. I would like to thank my mentor Yu Zhao for his guidance throughout the past year and for answering any questions that I had about my research topic or research in general. I would also like to thank Dr.~Khovanova for teaching me how to read and write research papers and work with LaTeX and Dr.~Makarova for her helpful feedback. 

\bibliographystyle{unsrt}
\bibliography{stuff}

\end{document}